\numberwithin{equation}{section}
\newtheorem{Theorem}{Theorem}[section]
\newtheorem{Corollary}[Theorem]{Corollary}
\newtheorem{Lemma}[Theorem]{Lemma}
\newtheorem{Proposition}[Theorem]{Proposition}
 { \theoremstyle{definition}
\newtheorem{Definition}[Theorem]{Definition}
\newtheorem{Example}[Theorem]{Example}
\newtheorem{Remark}[Theorem]{Remark} }
\newcommand{\C}{{\mathbb{C}}}
\newcommand{\Z}{{\mathbb{Z}}}
\newcommand{\<}{{\langle}}
\renewcommand{\>}{{\rangle}}
\newcommand{\CL}{{\mathcal{L}}}
\renewcommand{\ker}{{\rm{ker}}}
\newcommand{\tens}{\otimes}
\newcommand{\id}{{\rm id}}
\renewcommand{\o}{{}_{\scriptstyle(1)}}
\renewcommand{\t}{{}_{\scriptstyle(2)}}
\renewcommand{\th}{{}_{\scriptstyle(3)}}
\newcommand{\fo}{{}_{\scriptstyle(4)}}
\newcommand{\extd}{{\rm d}}
\newcommand{\del}{{\partial}}
\newcommand{\eps}{\epsilon}
\newcommand{\co}{{}_{\scriptstyle \bar{(1)}}}
\newcommand{\rz}{{}_{\scriptstyle \bar{(0)}}}
\newcommand{\la}{{\triangleright}}
\newcommand{\ra}{{\triangleleft}}
\newcommand{\cg}{{\mathfrak{g}}}
\newcommand{\tilder}{\grave}
\newcommand{\hatr}{\breve }
\newcommand{\hatl}{\check }
\newcommand{\ver}{{\rm ver}}
\begin{document}

\newcommand{\arXivNumber}{1903.12006}

\renewcommand{\thefootnote}{}

\renewcommand{\PaperNumber}{006}

\FirstPageHeading

\ShortArticleName{Poisson Principal Bundles}
\ArticleName{Poisson Principal Bundles\footnote{This paper is a~contribution to the Special Issue on Noncommutative Manifolds and their Symmetries in honour of~Giovanni Landi. The full collection is available at \href{https://www.emis.de/journals/SIGMA/Landi.html}{https://www.emis.de/journals/SIGMA/Landi.html}}}

\Author{Shahn MAJID and Liam WILLIAMS}
\AuthorNameForHeading{S.~Majid and L.~Williams}
\Address{School of Mathematical Sciences, Queen Mary University of London,\\ Mile End Rd, London E1 4NS, UK}
\Email{\href{mailto:s.majid@qmul.ac.uk}{s.majid@qmul.ac.uk}, \href{mailto:l.williams@qmul.ac.uk}{l.williams@qmul.ac.uk}}

\ArticleDates{Received June 11, 2020, in final form January 05, 2021; Published online January 13, 2021}

\Abstract{We semiclassicalise the theory of quantum group principal bundles to the level of Poisson geometry. The total space $X$ is a Poisson manifold with Poisson-compatible contravariant connection, the fibre is a Poisson--Lie group in the sense of Drinfeld with bicovariant Poisson-compatible contravariant connection, and the base has an inherited Poisson structure and Poisson-compatible contravariant connection. The latter are known to be the semiclassical data for a quantum differential calculus. The theory is illustrated by the Poisson level of the $q$-Hopf fibration on the standard $q$-sphere. We also construct the Poisson level of the spin connection on a principal bundle.}

\Keywords{noncommutative geometry; quantum group; gauge theory; symplectic geometry; poisson geometry; Lie bialgebra; homogenous space; $q$-monopole}

\Classification{58B32; 53D17; 17B37; 17B62}

\renewcommand{\thefootnote}{\arabic{footnote}}
\setcounter{footnote}{0}

\vspace{-1mm}

\section{Introduction}
It is known since the work of Drinfeld in the 1980's \cite{Dri:ham,Drinfeld} that the infinitesimal notion of a quantum group is that of a Lie bialgebra. This is a Lie algebra $\mathfrak{g}$ equipped with a `Lie cobracket' map $\delta$ that forms a Lie $1$-cocycle and makes $\mathfrak{g}^*$ into a Lie algebra. The associated connected and simply connected Lie group $G$ is a Poisson--Lie group, the semiclassical analogue of a quantum group; one can think of standard $q$-deformed quantum groups $\C_q[G]$ as in~\cite{FRT}, as the algebraic version of noncommutative deformations of $C^\infty(G)$ (which is essentially recovered in some algebraic form) if we let $\lambda\to 0$, where $q=e^{\lambda\over 2}$. These are dual to the Drinfeld--Jimbo $U_q(\cg)$ enveloping algebra deformations. The same applies to the bicrossproduct family of quantum groups associated to Lie algebra factorisations~\cite{Ma:book} which tend to coordinate algebras of inhomogeneous groups or dually to their enveloping algebras and have a role in quantum spacetime models.

Meanwhile, quantum groups also led in the 1990s to the emergence of a constructive approach to quantum differential geometry including but not limited to their quantum geometry. By now, there is a significant body of work and we refer to the text~\cite{BegMa} and references therein. This approach is somewhat different in character from Connes'~\cite{Con} well-known approach to noncommutative geometry based on generalising the Dirac operator (a `spectral triple') using operator algebras, as well as from noncommutative-algebraic geometric approaches such as~\cite{SvdB}. The constructive approach starts with the notion of 1-forms $\Omega^1$ over a possibly noncommutative algebra~$A$. Sections of vector bundles appear in the nicest case as $A$-$A$-bimodules $E$ and connections on them as bimodule connections $\nabla_E\colon E\to \Omega^1\tens_A E$ in the sense of~\cite{DVM,Mou}. There is also a notion of quantum principal bundles with quantum group fibre, `spin connections' on them and associated bundles~\cite{BrzMa}. The most well-known example is the $q$-Hopf fibration~\cite{BegMa, BrzMa,HajMa,Ma:spin}, with total space algebra $P=\C_q[{\rm SU}_2]$, quantum group fibre $H=\C_{q^2}\big[S^1\big]$ and base algebra $A=\C_q\big[S^2\big]$. We introduce a Poisson-level theory relevant to the first order data for deformation-quantisation of such quantum principal bundles and connections on them. This fits within a programme~\cite{BegMa:sem, BegMa:prg, FriMa,MaTao:plg} to semiclassicalise various aspects of noncommutative differential geometry, including quantum Riemannian geometry.

Our results effectively extend Drinfeld's work to a Poisson--Lie group $G$ as the structure group of a classical principal bundle, the total space $X$ of which is a Poisson manifold; the action of~$G$ is a Poisson action and, which is the critical new part, both~$G$ and $X$ are equipped with the leading order data for quantum differential forms $\Omega^1$ in a compatible way. In this regard, \cite{Bur,Fer,Hue} previously looked at deformations of vector bundles and found their Poisson-level data as a~{\em contravariant} or Lie--Rinehart connection $\hat\nabla$ (we will use a hat to remind us that these are not usual connections). The specific case of the deformation of the bimodule of 1-forms $\Omega^1$ was studied in~\cite{BegMa:sem, BegMa:twi, Haw} and requires $\hat\nabla$ to be Poisson-compatible and flat for a~strictly associative calculus at the next order of deformation. A main result \cite[Corollary~4.2]{MaTao:plg} in the case of a Poisson--Lie group $G$ is that flat $\hat\nabla$ for the semiclassical version of left translation-invariant $\big(\Omega^1,\extd\big)$ correspond to $\cg^*$, the dual of the Lie algebra of $G$, being a pre-Lie or Vinberg algebra. Meanwhile, one of the powerful lessons of quantum geometry model building is that very often no quantum differential calculus of the expected classical dimension exists. There are broadly two remedies to this `quantum anomaly for differentiation'. One is to allow an extra dimension in the cotangent bundle to absorb the anomaly~\cite{Ma:spo,Ma:rec}, which exits a deformation theory setting. The other is to allow a breakdown of associativity at order~$\lambda^2$ in the deformation parameter~$\lambda$, as in~\cite{BegMa:sem, BegMa:twi}. This corresponds at the Poisson level to allowing~$\hat\nabla$ to have curvature, which we do throughout.

Because this work stands at a triple point of Poisson geometry, noncommutative differential geometry and deformation theory, there are necessarily significant preliminaries, collected in Section~\ref{secpre}. This will entail compromises in notation (such as both left invariant and right invariant vector fields, instead of keeping everything to the left). We also include key Lemmas~\ref{PLGprop} and~\ref{PLGactionprop} known to experts on Poisson--Lie groups but which should be helpful to those coming from noncommutative geometry. The first genuinely new result is Proposition~\ref{taocovarianceprop} which better characterises one-sided translation invariance of $\hat\nabla$ on a Poisson--Lie group as
\[ \xi\la\hat\nabla_{\eta}\tau=\hat\nabla_{\xi\la\eta}\tau+\hat\nabla_{\eta}(\xi\la\tau) +i_{\tilde{\delta_\xi^1}}(\eta)\big(\delta_\xi^2\la\tau\big)\]
for all $\xi\in\cg$ and 1-forms $\eta$, $\tau$, with notation $\delta(\xi)=\delta^1_\xi\tens\delta^2_\xi$ (sum of terms understood). Here $i$ is interior product and $\xi\la$ is Lie derivative along the left-invariant vector field $\tilde\xi$ for the infinitesimal action of $G$ on itself. Theorem~\ref{bicovXi} gives a Lie-bialgebra characterisation of 2-sided translation invariant (bicovariant) $\hat\nabla$ in terms of data $\Xi\colon \cg^*\tens\cg^*\to \cg^*$. Section~\ref{PLGact} polarises these concepts to the case of a Poisson Lie group $G$ Poisson-acting on a Poisson manifold $X$ with a $G$-covariant Poisson-compatible contravariant connection $\hat\nabla$ on $X$.

This then feeds into our novel formulation of a Poisson principal bundle in Section~\ref{PPB}. The key new idea here is a transversality condition
\[ i_{\tilde\xi}\hat\nabla_\eta\tau=\pi_X(\eta,\extd i_{\tilde\xi}\tau)+i_{\widetilde{\Xi_\xi^{*1}}}(\eta) i_{\widetilde{\Xi_\xi^{*2}}}(\tau)\]
for all $\xi\in \cg$ and $\eta,\tau \in \Omega^1(X)$, see Definition~\ref{PPBdef}. Here $\pi_X$ is the Poisson bivector and $\Xi^*(\xi)=\Xi_\xi^{*1}\tens \Xi_\xi^{*2}$ (sum of terms understood). An important consequence is that the base $M$ of the bundle inherits a Poisson structure with Poisson-compatible contravariant connection $\hat\nabla^M$. This fits with our point of view of such data being the right notion of a `semiclassical manifold', as infinitesimal data for the quantisation of both the algebra {\em and} its differential structure. Along the way, Corollaries~\ref{corconcovX} and~\ref{corintsch} provide some consequences on the Schouten bracket $[\eta,\tau]$ of 1-forms in this context.

The final Section~\ref{Assocsec} formulates the semiclassical notion of a spin connection on a quantum principal bundle and quantum associated bundles. We first formulate a quantum spin connection as an equivariant bimodule connection $\nabla_P\colon P\to \Omega^1_{\rm hor}$. Then we show that the semiclassical data for the quantisation of a classical such $\nabla_P$ is a map $\Gamma\colon C^\infty(X)\to \Omega^1_{\rm hor}$ such that
\[ \Gamma(ap)=a\Gamma(p)+ \hat\nabla^X_{\extd a}\nabla_P p-\hat\nabla^X_{\extd p}\extd a-\nabla_P\{a,p\}_X\]
for all $a\in C^\infty(M)$, $p\in C^\infty(X)$. The main result, Theorem~\ref{semiclassbundleconn}, is that every classical spin connection has a canonical such $\Gamma$ for the semiclassical level. The proof makes critical use of the bundle transversality condition as well as bicovariance of $\hat\nabla^G$ on the fibre, bringing together our results.

The paper concludes with some directions for further work. Note that the seminal work \cite{Fer} considered a notion of contravariant connections on ordinary principal bundles over a Poisson manifold that induce contravariant connections on associated bundles. This is not what we do (we consider the deformation of usual and not contravariant connections on Poisson principal bundles), but is certainly of interest. Meanwhile, \cite{Bor} showed existence and uniqueness of the deformation-quantisation of principal bundles in sense of $C^\infty(X)[[\lambda]]$ a right module over the $*$-product $C^\infty(M)[[\lambda]]$ equivariantly under the structure group $G$, which is not quantised. The latter could be seen as a special case of a PLG with zero Poisson bracket and even here, the paper found obstructions for the bimodule case. These are very different considerations from the ones in the present paper but they support our view that we may need nonassociative geometry at higher deformation order.

\section{Preliminaries}\label{secpre}

\subsection{Elements of noncommutative differential geometry}\label{Omega1}
We outline a formalism of noncommutative differential geometry in a bimodule approach, including Hopf algebras with differential structure. The classical model is $A=C^\infty(M)$ for a~smooth real manifold~$M$ with exterior algebra $\Omega(M)$, while the general formulation works over any unital algebra with an algebraically defined exterior algebra~$\Omega_A$.

Thus, a `differential calculus' on a unital algebra $A$ consists of a graded algebra $\Omega_A=\oplus\Omega^n_A$ with $n$-forms $\Omega^n_A$ for $n\geq 0$, an associative product $\wedge\colon \Omega^n_A\tens\Omega^m_A\rightarrow\Omega^{n+m}_A$ and an exterior derivative $\extd\colon \Omega^n_A\rightarrow\Omega^{n+1}_A$ satisfying the graded-Leibniz rule. Here $\Omega^0_A=A$ and we assume that~$\Omega_A$ is generated by degree $0$, $1$ and that~$\Omega^1_A$ is spanned by products of~$A$ and~$\extd A$. Both conditions are slightly stronger than a DGA in homological algebra.

Vector bundles are expressed as (typically projective) $A$-modules $E$ (classically, this would be the space of sections). A left connection on this is $\nabla_E\colon E\rightarrow\Omega^1_A\tens_AE$ obeying the left Leibniz rule $\nabla_E(ae)=\extd a\tens e+a\nabla_Ee$ for all $a\in A$, $e\in E$. We have a {\em bimodule connection}~\cite{DVM,Mou} if $E$ is a bimodule and there is a bimodule map with
\[ \sigma_E\colon \ E\tens_A\Omega^1_A\rightarrow\Omega^1_A\tens_AE, \qquad \nabla_E(ea)=(\nabla_Ee)a+\sigma_E(e\tens_A\extd a).\]
If $\sigma_E$ is well-defined then it is uniquely determined, so its existence is a property of a left connection on a bimodule. Bimodule connections extend to tensor products, giving us a monoidal category, where the tensor product connection on $E\tens_A F$ uses $\sigma_E$ to reorder the output of $\nabla_F$. More details and results are in~\cite{BegMa}.

The notion of a Hopf algebra is covered in several texts, e.g., \cite{Ma:book}. Briefly, this means a~unital algebra $H$ which is also a coalgebra with counit $\eps\colon H\to k$ (for $k$ the ground field) and coproduct $\Delta\colon H\to H\tens H$, with these maps being algebra homomorphisms, and for which an antipode $S\colon H\to H$ exists. The latter is required to obey $(Sh\o)h\t=\eps(h)=h\o Sh\t$ in the `Sweedler notation' $\Delta h=h\o\tens h\t$ (sum of such terms implicit). A left action of a~classical group on a vector space $V$ corresponds in our function algebra point of view to a right coaction $\Delta_R\colon V\to V\tens H$ obeying the arrow-reversal of the usual axioms for a right action. If $V=A$ then we say that $A$ is a {\em comodule algebra} if $\Delta_R$ is an algebra homomorphism. If we have a~suitable dually paired Hopf algebra (the algebraic dual typically being to big) then $A$ becomes a {\em module algebra} for the left action $\la$ given by evaluation against the left factor of the output of~$\Delta_A$, meaning $\phi\la(ab)=(\phi\o\la a)(\phi\t\la b)$ and $\phi\la 1=\eps(\phi)1$ for $\phi$ in the dually paired Hopf algebra. If $H$ coacts from the left then we similarly have a right action $\ra$ by evaluation against the left factor of the output of~$\Delta_L$.

In particular, a Hopf algebra coacts on itself as a comodule algebra from both the left and the right via $\Delta$. A calculus $\Omega^1_H$ is left (resp.\ right) translation covariant if the coaction extends to $\Omega^1$ by
\[\Delta_L(h\extd g)=h\o g\o\tens h\t \extd g\t,\qquad \Delta_R(h\extd g)=h\o \extd g\o\tens h\t g\t\]
(making it a `Hopf module') with the coaction commuting with $\extd$. A calculus is called bicovariant if both coactions extend, and strongly bicovariant if $\Delta_L+\Delta_R$ makes the exterior algebra $\Omega_H$ into a super-Hopf algebra. We will not need higher forms, but this is true for the canonical $\Omega_H$ in the bicovariant case in \cite{Wor}.

Finally, and equally briefly, the data for a quantum principal bundle is $P$ a right $H$-comodule algebra with `base' the algebra $A=P^H=\{p\in P\, |\, \Delta_Rp=p\tens 1\}$ of elements fixed under $\Delta_Rp=p\rz \tens p\co $ in a compact notation. We assume that $\Omega^1_P$ is $H$-covariant in that $\Delta_R$ extends in a similar way to right-covariance of~$\Omega^1_H$ above. In this case $\Omega^1_A=A\extd A$ computed in $\Omega^1_P$ is the inherited calculus. We also assume that $\Omega^1_H$ is bicovariant and let $\Lambda^1_H$ be the space of left-invariant 1-forms. Next, we assume that there is a well-defined map ${\rm ver}\colon \Omega^1_P\to P\tens \Lambda^1_H$ (the generator of vector fields for the infinitesimal action) given by
\begin{equation}\label{PHver} \text{ver}(p\extd q)=pq\rz \tens\big(Sq\co\o \big)\extd q_{\bar{(1)}((2)}=pq\rz\tens\varpi \pi_\eps\big(q\co\big), \qquad \forall\, p,q\in P, \end{equation}
where $\pi_\eps\colon H\to H^+$ defined by $\pi_\eps(h)=h-1\eps(h)$ projects onto the kernel of the counit, and the Maurer--Cartan form $\varpi \colon H^+\to \Omega^1$ is defined by $\varpi (h)=Sh\o\extd h\t$ and affords an identification of $\Lambda^1_H$ as a quotient of $H^+$. Finally, as a replacement for local triviality, we require a short exact sequence of left $P$-modules
\begin{equation}\label{PHexact} 0\longrightarrow P\Omega^1_AP\longrightarrow \Omega^1_P\xrightarrow{\text{ver}}P\tens\Lambda^1_H\longrightarrow 0. \end{equation}
More details are in \cite{BrzMa, Ma:spin} and \cite[Chapter~5]{BegMa}. We will at certain points need to discuss this algebraic theory, adapted to a putative formal deformation version, but only to the point of extracting Poisson level data which then make sense by themselves in the smooth setting.

\subsection{Elements of deformation theory}\label{secpredef}

In deformation theory, it is usual to adapt the algebraic context above by now working over the ring $\C[[\lambda]]$ for a formal deformation parameter $\lambda$ and with an associative $\C[[\lambda]]$-linear $*$-product of the form $a*b=\sum_{i=0}^\infty C_i(a,b)\lambda^i$ on $A=C^\infty(M)[[\lambda]]$, see, e.g.,~\cite{Bur}. In this context, it is well known that
\[ a* b-b * a=\lambda\lbrace a,b\rbrace+\mathcal{O}\big(\lambda^2\big). \]
defines a Poisson bracket $\{a,b\}=C_1(a,b)-C_1(b,a)$ for $a,b\in C^\infty(M)$. We denote the associated Poisson bivector by $\pi$ so that $\lbrace a,b\rbrace=\pi(\extd a,\extd b)$. Similarly, if we have a vector bundle over~$M$ with space of sections $\Gamma$ as a projective module over $C^\infty(M)$, and this is deformed to an $A$-bimodule $\Gamma[[\lambda]]$ with left and right bullet products $a\bullet \eta=\sum_{i=0}^\infty L_i(a,\eta)\lambda^i$ and $\eta\bullet a=\sum_{i=0}^\infty R_i(\eta,a)\lambda^i$ then
\[a\bullet\eta-\eta\bullet a=\lambda\hat\nabla_{\extd a}\eta+\mathcal{O}\big(\lambda^2\big) \]
for all $a\in C^\infty(M), \eta\in \Gamma$, where $\hat\nabla_{\extd a}\eta:=L_1(a,\eta)-R_1(\eta,a)$ is a {\em contravariant connection} or `Lie--Rinehart' covariant derivative~\cite{Bur,Fer, Hue} characterised by
\begin{equation}\label{precon1} \hat\nabla_{a\zeta}=a\hat\nabla_\zeta,\qquad \hat\nabla_\zeta(a\eta)=\pi(\zeta,\extd a)\eta+ a\hat\nabla_\zeta\eta,\end{equation}
for all $\zeta\in \Omega^1(M)$. In the strictly associative case, this will be flat (as a special case of a more general statement in~\cite{Bur}). We will be interested in the case $\Gamma=\Omega^1(M)$, the classical space of 1-forms, and for $\Omega^1[[\lambda]]$ to obey the Leibniz rule with an undeformed exterior derivative $\extd$, which requires the further {\em Poisson-compatibility}
\begin{equation}\label{precon2} [\eta,\tau]=\hat{\nabla}_\eta\tau-\hat{\nabla}_\tau\eta\end{equation}
with respect to a `Schouten bracket' $[\eta,\tau]$ of 1-forms defined by $[\extd a,\extd b]=\extd\{a,b\}$ along with $[a\eta,b\tau]=ab[\eta,\tau]+a\pi(\eta,\extd b)\tau-b\pi(\tau,\extd a)\eta$. More usually in the literature (see, e.g., \cite{Fer}), condition (\ref{precon2}) is called `torsionless' and Poisson-compatible refers to something stronger, whereas we reserve torsion for its usual meaning for linear connections. Equations (\ref{precon1})--(\ref{precon2}) are the geometric version on general 1-forms of the conditions \cite{BegMa:sem,BegMa:prg, Haw}
\begin{gather*} \hat\nabla_{\extd(ab)}\eta=a\hat\nabla_{\extd b}\eta+ (\hat\nabla_{\extd a}\eta) b,\qquad \hat\nabla_{\extd a}(b\eta)=b\hat\nabla_{\extd a}(\eta)+\lbrace a,b\rbrace\eta, \\
\extd\lbrace a,b\rbrace=\hat\nabla_{\extd a}\extd b-\hat\nabla_{\extd b}\extd a\end{gather*}
coming out of the order $\lambda$ analysis. The point of view in \cite{BegMa:sem,BegMa:prg} was to think if this equivalently as a partially defined connection or preconnection $\nabla$ by $\hat\nabla_{\extd a}=\nabla_{\hat a}$, where $\hat{a}=\lbrace a,\;\rbrace$ is the Hamitonian vector field associated to $a\in C^\infty(M)$ and $\nabla$ is a usual connection but only defined along such vector fields. Both points of view are useful. In the symplectic case, the data is equivalent to a linear connection which preserves the Poisson tensor up to torsion, and the contravariant $\hat \nabla$ is its pull back along the map $\pi^\#\colon \Omega^1(M)\to\rm{Vect}(M)$ given by $\pi^\#(\eta)=\pi(\eta,\ )$ with $\pi^\#(\extd a)=\hat a$.

 The strictly associative setting is, however, often not compatible with deformation theory in that for examples related to quantum groups, one may not have $\Omega^1$, $\extd$ with desired covariance properties and with expected classical dimension over the noncommutative algebra (i.e., even in the free module case where there is a dimension). This was noted at the Poisson level in \cite{BegMa:sem, Haw} and leads into a somewhat different point of view~\cite{BegMa:prg} where we consider formal products of the form
\begin{gather} a*b=ab+{\lambda\over 2}\{a,b\}+ \mathcal{O}\big(\lambda^2\big),\qquad a\bullet \eta=a\eta+{\lambda\over 2}\hat\nabla_{\extd a}\eta+\mathcal{O}\big(\lambda^2\big),\nonumber\\
 \eta\bullet a=a\eta-{\lambda\over 2}\hat\nabla_{\extd a}\eta+\mathcal{O}\big(\lambda^2\big)\label{proddef}\end{gather}
but demand associativity and the bimodule property only $\mathcal{O}\big(\lambda^2\big)$. The minimal setting is to assume only that $\{\ ,\ \}$ is a biderivation and $\hat\nabla$ a possibly curved contravariant connection compatible with it. This leads into fully nonassociative geometry, while the most important special case appears to be the intermediate one where~$*$ is associative (deformation-quantising a Poisson manifold) but~$\bullet$ is not necessarily a bimodule structure $\mathcal{O}\big(\lambda^2\big)$, so that $\{\ ,\ \}$ is a Poisson bracket but $\hat\nabla$ can be curved. This leads to a self-contained setting, which we adopt, of a Poisson manifold equipped with possibly curved Poisson-compatible contravariant connection.

\subsection{Elements of Poisson--Lie theory}\label{PLGprelim}

Following the work of Drinfeld \cite{Drinfeld}, the semiclassical object underlying a deformation Hopf algebra is a {\em Poisson--Lie group} (PLG). This means a Lie group $G$ which is also a~Poisson manifold such that the multiplication map $G\times G\rightarrow G$ is a Poisson map, where $G\times G$ is equipped with the product Poisson structure. In terms of the Poisson tensor $\pi$, this is
\[ \pi(gh)=L_{g*}(\pi(h))+R_{h*}(\pi(g))\]
for all $g,h\in G$. Here $R_g\colon G\to G$ and $L_g\colon G\to G$ are right and left translation by $g$ and have derivatives $R_{h*}\colon T_gG\to T_{gh}G$ and $L_{g*}\colon T_hG\to T_{gh}G$. The corresponding data at the Lie algebra level is a {\em Lie bialgebra}, i.e., $(\mathfrak{g},\delta)$ where $\mathfrak{g}$ is a Lie algebra and $\delta\colon \mathfrak{g}\rightarrow\mathfrak{g}\wedge\mathfrak{g}$ is a Lie coalgebra (so its dual is a Lie bracket map $\mathfrak{g}^*\tens\mathfrak{g}^*\rightarrow\mathfrak{g}^*$ on $\mathfrak{g}^*$), with $\delta$ a $1$-cocycle on $\mathfrak{g}$ relative to the adjoint representation of $\mathfrak{g}$ on $\mathfrak{g}\wedge\mathfrak{g}$ in the sense
\[\delta([\xi,\eta])=\rm{ad}_{\xi}(\delta(\eta))-\rm{ad}_{\eta}(\delta(\xi)),\qquad \forall\, \xi,\eta\in\mathfrak{g}. \]

Next, the right and left actions of the group on itself define (respectively) left and right actions on the algebra of functions on the group $C^\infty(G)$. We define the {\em left-translation invariant formulation} as follows. There is a {\em right} action
\[ (a\ra h)(g)=a(hg), \qquad\forall\, g,h\in G, a\in C^\infty(G) \]
(usually made into a left action via the group inverse, but we refrain from this). Setting $g=e^{t\xi}$ and differentiating at $t=0$ gives a right action of the Lie algebra $\cg$,
\[ (a\ra\xi)(g)=\frac{\extd}{\extd t}\Big|_0a\big(e^{t\xi}g\big)=\tilder\xi(a)(g), \]
defining an associated {\em right-invariant vector field} $\tilder\xi$. This can be defined also as $\tilder\xi(g)=R_{g*}(\xi)$. The actions extends to 1-forms by
\[ (\tau\ra h)(g)=L_h^*(\tau(hg))\]
and $\ra\xi=\CL_{\tilder\xi}=i_{\tilder\xi}\extd+\extd i_{\tilder\xi}$ on forms, where $i$ denotes interior product and $\CL$ the Lie derivative. We similarly define the {\em left-invariant vector field} $\tilde\xi$ associated to $\xi\in\cg$ as $\tilde\xi(g)=L_{g*}\xi$ for $\xi\in \cg$ equating to $\xi\la$ defined below by right translation. We also recall that there is a one to one correspondence between $1$-forms $\tau\in \Omega^1(G)$ and $\tilde{\tau}\in C^\infty(G,\mathfrak{g}^*)$ via $\tilde{\tau}(g)=L^*_{g}(\tau(g))$ or conversely $s\in C^\infty(G,\mathfrak{g}^*)$ defines a $1$-form which we will denote $\hatl{s}(g)=L^*_{g^{-1}}(s(g))$. In these terms, the action is $(s\ra h)(g)=s(hg)$ as for a function. In particular, $\hatl v(g)=L_{g^{-1}}^*v$ is the left-translation invariant 1-form (invariant under $\ra$) associated to $v\in\cg^*$ and $\<\tilde\xi,\hatl v\>=\<\xi,v\>$ is constant. We also have $\big\langle \xi,\big(\tilde\extd a\big)(g)\big\rangle=\tilde\xi(a)(g)$. Finally, in the right translation-invariant formulation, we start with the left action
\[ (h\la a)(g)=a(gh),\qquad (\xi\la a)(g)={\extd \over\extd t}\Big|_{t=0}a\big(ge^{t\xi}\big)=\tilde{\xi}(a)(g)\]
is the left-invariant vector field $\tilde\xi$ for the infinitesimal action of $\xi\in\cg$. The group action extends to differential forms $\tau\in \Omega^1(G)$ by
\[ (h\la\tau)(g)=R^*_{h}(\tau(gh))\]
with Lie algebra action $\la$ given by Lie derivative along $\tilde\xi$. In this case, the relevant one to one correspondence between $1$-forms $\tau\in \Omega^1(G)$ and, say, $\tilder{\tau}\in C^\infty(G,\mathfrak{g}^*)$ is via $\tilder{\tau}(g)=R^*_{g}(\tau(g))$ with inverse $s\in C^\infty(G,\mathfrak{g}^*)$ defining a $1$-form $\hatr{s}(g)=R^*_{g^{-1}}(s(g))$. In these terms, the action is $(h\la s)(g)=s(gh)$ as for a function. In particular, $\hatr v(g)=R_{g^{-1}}^*v$ is a right-translation invariant 1-form (invariant under $\la$) associated to $v\in\cg^*$. We have $\<\tilder\xi,\hatr v\>=\<\xi,v\>$ and $\<\xi,(\tilder\extd a)(g)\>=\tilder\xi(a)(g)$ where $\tilder\xi(g)=R_{g*}\xi$ is the right-invariant vector field associated to $\xi\in\cg$.

Using this second formulation, we can view any Poisson structure on a Lie group as the right translation of some map $D\colon G\to\mathfrak{g}\tens\mathfrak{g}$, i.e., $\pi(g)=R_{g*}(D(g))$. We define
\begin{equation*}
\delta\xi:= {\extd\over\extd t}\Big|_{t=0} D\big(e^{t\xi}\big)\in \cg\tens\cg,\qquad\forall\, \xi\in\mathfrak{g}.\end{equation*}
Drinfeld showed that if $(\cg,\delta)$ is a Lie bialgebra then the associated connected and simply connected Lie group $G$ is a Poisson--Lie group by exponentiating $\delta$ from a Lie algebra to a Lie group cocycle. Conversely, if $(G,D)$ is a Poisson--Lie group, then its Lie algebra is a Lie bialgebra by differentiating $D$ at the identity. Here $\delta$ on a Lie bialgebra is a~$1$-cocycle hence exponentiates to a 1-cocycle $D\in Z^1_{{\rm Ad}}(G,\mathfrak{g}\tens\mathfrak{g})$. More details are in~\cite{Ma:book}.

Drinfeld~\cite{Drinfeld} also introduced a deformation theory point of view where quantum groups $U_q(\cg)$ are deformations of $U(\cg)$ as a Hopf algebra with $q=e^{\lambda\over 2}$. Suffice it to say that the coproduct homomorphism property applied to $\Delta(\xi\eta-\eta\xi)$ leads at order $\lambda$ to the cocycle axiom for $\delta:=\Delta-\tau\circ\Delta$, with $\tau$ the transposition map. Meanwhile, if given $\delta$, we can suppose a specific form of deformation,
\begin{equation}\label{Deltadef} \Delta \xi=\xi\tens 1+ 1\tens\xi+\frac{\lambda}{2}\delta(\xi) + \mathcal{O}\big(\lambda^2\big)\end{equation}
for $\xi\in\cg$ as the analogue on the coalgebra side of the specific form in~(\ref{proddef}) on the algebra side. Here $\delta$ can be viewed as a Lie bracket on $\cg^*$ which in turn is the Poisson bracket of linear functions on~$G$ near the identity. These quantum groups are dual to corresponding $\C_q[G]$ which again have formal deformation versions. A complication here is that $C^\infty(G)$ is only a Hopf algebra with respect to a topological tensor product, but this can be handled in the compact connected case~\cite{Bon}. In practice, one can work with a dense algebraic model $\C[G]$, generated in the case of classical Lie type by the matrix entry coordinate functions $t^i{}_j$ according to $\rho\colon G\subset M_n$. Here $\rho$ extends to $U(\cg)$ and provides a dual pairing $U(\cg)$ by $\<h,t^i{}_j\>=\rho(h)^i{}_j$ of Hopf algebras. With such complications understood, we will similarly suppose for the sake of discussion some formal deformation $C^\infty(G)[[\lambda]]$ for the specific leading form of product~(\ref{proddef}) and a dual $U(\cg)[[\lambda]]$ with the leading form~(\ref{Deltadef}) of coproduct.

\section{Poisson--Lie groups and their semiclassical calculi revisited}\label{PLGdeform}

We start with a more geometric characterisation of the PLG condition.

\begin{Lemma}[cf.~\cite{DS,KS,Vai}] \label{PLGprop}Let $G$ be a connected Lie group with Poisson bracket $\{\ ,\ \}$ vanishing at the identity and associated map $D\colon G\to \cg\tens\cg$ with differential at the identity $\delta(\xi)=\delta_\xi^1\tens\delta_\xi^2$ $($sum of such terms understood$)$. Then
\begin{equation}\label{eqPLGprop}
\xi\triangleright\lbrace a,b\rbrace=\lbrace \xi\triangleright a,b\rbrace+\lbrace a,\xi\triangleright b\rbrace+\big(\delta_\xi^1\triangleright a\big)\big(\delta_\xi^2\triangleright b\big),
\end{equation}
for all $\xi\in\mathfrak{g}$, $a,b\in C^\infty(G)$ if and only if $G$ is a PLG via $\{\ ,\ \}$.
\end{Lemma}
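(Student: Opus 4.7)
The plan is to reduce the formula (\ref{eqPLGprop}) to a Lie-derivative identity, then recast both it and the PLG condition as equivalent conditions on the map $D\colon G\to\cg\tens\cg$, and finally invoke connectedness of $G$ to integrate the resulting infinitesimal form.

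First I would note that, since $(\xi\la a)(g)=\tilde\xi(a)(g)$, the Lie-derivative formula for a bivector gives
\[(\CL_{\tilde\xi}\pi)(\extd a,\extd b)=\xi\la\{a,b\}-\{\xi\la a,b\}-\{a,\xi\la b\},\]
while the right hand side of (\ref{eqPLGprop}) equals $\widetilde{\delta\xi}(\extd a,\extd b)$, where $\widetilde{\delta\xi}(g):=L_{g*}\delta\xi\in T_gG\tens T_gG$ is the left-invariant bivector field associated to $\delta\xi\in\cg\tens\cg$. So (\ref{eqPLGprop}) is equivalent to the tensorial identity $\CL_{\tilde\xi}\pi=\widetilde{\delta\xi}$ for all $\xi\in\cg$. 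Since the flow of $\tilde\xi$ is $R_{e^{t\xi}}$ and $\pi(g)=R_{g*}D(g)$, the cancellation $R_{e^{-t\xi}*}\circ R_{ge^{t\xi}*}=R_{g*}$ yields $(\CL_{\tilde\xi}\pi)(g)=R_{g*}(\tilde\xi D)(g)$, so (\ref{eqPLGprop}) is in turn equivalent to the $\cg\tens\cg$-valued identity
\[(\tilde\xi D)(g)=\mathrm{Ad}_g\delta\xi\qquad \forall\, g\in G,\ \xi\in\cg.\]

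Next I would show that, pushed down to $\cg\tens\cg$ by $R_{(gh)^{-1}*}$ and using $R_{(gh)^{-1}}\circ L_g\circ R_h=c_g$ (conjugation by $g$) together with $R_{(gh)^{-1}}\circ R_h\circ R_g=\mathrm{id}$, the PLG equation $\pi(gh)=L_{g*}\pi(h)+R_{h*}\pi(g)$ is equivalent to the $1$-cocycle identity $D(gh)=D(g)+\mathrm{Ad}_g D(h)$. The lemma thus reduces to the equivalence of this cocycle with the infinitesimal condition $(\tilde\xi D)(g)=\mathrm{Ad}_g\delta\xi$. The forward direction is direct: differentiating the cocycle at $h=e$ in direction $\xi$, using $D(e)=0$ (which holds because $\{\,,\,\}$ vanishes at the identity) and $\delta=\extd D|_e$, recovers the infinitesimal identity.

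For the converse, fix $g\in G$ and define $\Phi_g(h):=D(gh)-D(g)-\mathrm{Ad}_g D(h)$, so $\Phi_g(e)=0$. The hypothesis, applied twice together with $\mathrm{Ad}_{gh}=\mathrm{Ad}_g\mathrm{Ad}_h$, yields
\[(\tilde\xi\Phi_g)(h)=(\tilde\xi D)(gh)-\mathrm{Ad}_g(\tilde\xi D)(h)=\mathrm{Ad}_{gh}\delta\xi-\mathrm{Ad}_g\mathrm{Ad}_h\delta\xi=0\]
for every $\xi\in\cg$, so $\Phi_g$ is annihilated by every left-invariant vector field on $G$, hence is locally constant, hence identically zero by connectedness of $G$. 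This is exactly the cocycle identity, equivalent to the PLG property. The only real obstacle is the bookkeeping that keeps the three equivalent forms of the data (Lie-derivative identity, $\cg\tens\cg$-valued identity, cocycle) straight and verifies the two commutation identities for left and right translations; the integration from infinitesimal to global is painless once $\Phi_g$ is shown to be killed by every left-invariant vector field.
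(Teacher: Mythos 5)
Your proof is correct and follows essentially the same route as the paper's: both reduce \eqref{eqPLGprop} to the identity $\CL_{\tilde\xi}\pi=L_{g*}\delta\xi$, recast this as $\tilde\xi(D)={\rm Ad}_{g*}\delta\xi$, and use connectedness to pass between this infinitesimal condition and the group $1$-cocycle property of $D$ (equivalently the PLG condition). The only difference is that you carry out the integration step explicitly via your $\Phi_g$ argument, whereas the paper delegates it to the argument of \cite[Theorem~8.4.1]{Ma:book}.
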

\begin{proof} The proof in one direction, where the PLG condition holds, is clear from \cite{KS} but we include it for completeness. This can also be found in~\cite{DS} as well as easily derived from \cite[Proposition~10.7]{Vai}. We recall that $\lbrace a,b\rbrace(g)=\langle\pi(g),\extd a\tens\extd b\rangle$, where evaluation here is the standard one between sections of the tangent bundle and cotangent bundle extended to tensor products. Then
\begin{align*}
\xi\la\{a,b\}&=\tilde\xi(\{a,b\})=\<\CL_{\tilde\xi}\pi,\extd a\tens\extd b\>+\big\langle \pi,\extd\tilde\xi(a)\tens\extd b+\extd a\tens\extd\tilde\xi (b)\big\rangle
\end{align*}
with the last two terms $\{\xi\la a,b\}+\{a,\xi\la b\}$. For the first term, if the PLG condition holds then
\begin{align*} (\CL_{\tilde\xi}\pi)(g)&=(\xi\la\pi)(g)={\extd \over\extd t}\Big|_0 R_{e^{-t\xi}*}\pi\big(ge^{t\xi}\big)={\extd \over\extd t}\Big|_0\big( R_{e^{-t\xi}*}L_{g*}\pi\big(e^{t\xi}\big)+\pi(g) \big)\\
&={\extd \over\extd t}\Big|_0 L_{g*}D\big(e^{t\xi}\big)=L_{g*}\delta\xi,\end{align*}
which evaluates on $\extd a\tens\extd b$ to $\tilde\delta^1_\xi(a)\tilde\delta^2_\xi(b)$, cf.~\cite[p.~54]{KS}. Conversely, if (\ref{eqPLGprop}) holds then we deduce that the differential at $t=0$ of $R_{e^{-t\xi}*}\pi\big(ge^{t\xi}\big)-R_{e^{-t\xi}*}L_{g*}\pi\big(e^{t\xi}\big)$ vanishes, i.e., that ${\rm Ad}_{g*}\delta\xi={\extd \over\extd t}\big|_0 D\big(g e^{t\xi}\big)=\tilde\xi(D)(g)$.
If we suppose that $D(e)=0$ and that $G$ is connected, then~$D$ is a 1-cocycle on the group (by the same argument as in the proof of \cite[Theorem~8.4.1]{Ma:book}). Hence~$G$ is a PLG.
\end{proof}

Next, we consider the semiclassical data for differential calculus, i.e., a Poisson-compatible contravariant connection~$\hat\nabla$. For any contravariant connection, we define
\begin{equation*} \tilde{\nabla}_{\extd a}\colon \ C^\infty(G,\mathfrak{g}^*)\rightarrow C^\infty(G,\mathfrak{g}^*),\qquad \tilde{\nabla}_{\extd a}s:=\tilde{\ }\circ{\hat\nabla}_{\extd a}\hatl s,\end{equation*}
where $\hatl s$ is the 1-form corresponding to $s\in C^\infty(G,\cg^*)$ and $\tilde{\ }$ in this context turns a 1-form back into an element of $C^\infty(G,\cg^*)$, as explained in Section~\ref{PLGprelim}.
The starting point in~\cite{BegMa:sem} is that $\tilde\nabla$ necessarily has the form
\begin{equation}\label{mapXi}
(\tilde{\nabla}_{\extd a}s)(g)=\lbrace a,s\rbrace(g)+\tilde{\Xi}(g,\tilde{\extd a}(g),s(g)),
\end{equation}
for some map $\tilde\Xi\colon G\times\mathfrak{g}^*\times\mathfrak{g}^*\rightarrow\mathfrak{g}^*$ and is left translation covariant (in a manner corresponding to a calculus being left covariant in a Hopf algebra sense) if and only if $\tilde{\Xi}(g,\phi,\psi)$ is independent of $g$ for all $\phi,\psi\in\mathfrak{g}^*$, i.e., given by $\Xi\colon \cg^*\tens\cg^*\to \cg^*$. In this case, the (contravariant) connection is Poisson-compatible if and only if
\begin{equation}\label{Xipoisson} \Xi(\phi,\psi)-\Xi(\psi,\phi)=[\phi,\psi]_{\mathfrak{g}^*}, \qquad \forall\, \phi,\psi\in\mathfrak{g}^*.
\end{equation}
This uses the left-translation invariant formulation associated with right action~$\ra$. We equally well have a right-translation invariant formulation with left action $\la$, and in fact we will focus proofs on this case. In this case, $\tilder{\nabla}_{\extd a}\colon C^\infty(G,\mathfrak{g}^*)\rightarrow C^\infty(G,\mathfrak{g}^*)$ defined by $\tilder{\nabla}_{\extd a}s:=\tilder{\ }\circ{\hat\nabla}_{\extd a}\hatr s$ has the form
\begin{equation*}
(\tilder{\nabla}_{\extd a}s)(g)=\lbrace a,s\rbrace(g)+\tilder{\Xi}(g,\tilder{\extd a}(g),s(g))
\end{equation*}
and is right translation covariant if and only if $\tilder\Xi$ is again given by a constant $\Xi\colon \cg^*\tens\cg^*\to \cg^*$, with Poisson-compatibility if and only if~(\ref{Xipoisson}) holds. The latter and the contravariant connection being flat is equivalent to $\Xi$ being a pre-Lie structure for $\cg^*$~\cite{MaTao:plg} (but we do not limit ourselves to this case.) Our next goal is a~more geometric reformulation of these results as following from Lemma~\ref{PLGprop}.

\begin{Proposition}\label{taocovarianceprop} Let $G$ be a connected PLG and $\hat\nabla$ a contravariant connection on $\Omega^1(G)$. Then $\hat\nabla$ is right-translation covariant in the sense
\begin{equation}\label{eqtaocov}\xi\la\hat\nabla_{\eta}\tau=\hat\nabla_{\xi\la\eta}\tau+\hat\nabla_{\eta}(\xi\la\tau) +i_{\tilde{\delta_\xi^1}}(\eta)\big(\delta_\xi^2\la\tau\big)
\end{equation}
for all $\tau,\eta\in\Omega^1(G)$ and $\xi\in\cg$ if and only if $\tilder\Xi$ is constant on~$G$.
\end{Proposition}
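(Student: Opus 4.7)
My strategy is to transport the identity (\ref{eqtaocov}) to the picture of $\cg^*$-valued functions via $\tau\leftrightarrow\tilder\tau$, where the contravariant connection takes the explicit form $(\tilder\nabla_{\extd a}\tilder\tau)(g)=\{a,\tilder\tau\}(g)+\tilder\Xi(g,\tilder\extd a(g),\tilder\tau(g))$, and then read the cocycle-type difference between the two sides as precisely the $\tilde\xi$-derivative of the $g$-dependence of $\tilder\Xi$. First I would observe that both sides of (\ref{eqtaocov}) are $C^\infty(G)$-linear in $\eta$: the left side and the $\hat\nabla_\eta(\xi\la\tau)$ and $i_{\tilde{\delta^1_\xi}}(\eta)(\delta^2_\xi\la\tau)$ terms on the right are manifestly so, while $\hat\nabla_{\xi\la\eta}\tau$ produces the matching correction when one expands $\xi\la(a\extd b)=(\tilde\xi a)\extd b+a\extd\tilde\xi b$ and uses $\hat\nabla_{a\zeta}=a\hat\nabla_\zeta$. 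Hence it suffices to verify (\ref{eqtaocov}) for $\eta=\extd a$ with arbitrary $a\in C^\infty(G)$.

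Next I would check the three transport identities needed: $\widetilde{\xi\la\tau}=\tilde\xi(\tilder\tau)$ (differentiate the defining flow $(h\la\tau)(g)=R^*_h\tau(gh)$ and use $\tilder\tau(g)=R^*_g\tau(g)$); consequently $\widetilde{\xi\la\extd a}=\tilde\xi(\tilder\extd a)=\tilder\extd(\tilde\xi a)$; and $\widetilde{\delta^2_\xi\la\tau}=\delta^2_\xi\la\tilder\tau$. Also $i_{\tilde{\delta^1_\xi}}(\extd a)=\tilde{\delta^1_\xi}(a)$ as a scalar. Applying $\xi\la=\tilde\xi$ to both sides of $\widetilde{\hat\nabla_{\extd a}\tau}=\{a,\tilder\tau\}+\tilder\Xi(g,\tilder\extd a,\tilder\tau)$ and using Lemma~\ref{PLGprop} (applied componentwise to the $\cg^*$-valued function $\tilder\tau$) gives
\[
\tilde\xi\{a,\tilder\tau\}=\{\tilde\xi a,\tilder\tau\}+\{a,\tilde\xi\tilder\tau\}+\tilde{\delta^1_\xi}(a)\bigl(\delta^2_\xi\la\tilder\tau\bigr),
\]
which matches exactly the transported $\{\,,\,\}$-contributions coming from $\widetilde{\hat\nabla_{\extd(\tilde\xi a)}\tau}$, $\widetilde{\hat\nabla_{\extd a}\xi\la\tau}$ and the $\delta$-correction on the right of (\ref{eqtaocov}).

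After this cancellation, the difference LHS $-$ RHS reduces to
\[
\tilde\xi\bigl(\tilder\Xi(g,\tilder\extd a,\tilder\tau)\bigr)-\tilder\Xi\bigl(g,\tilde\xi(\tilder\extd a),\tilder\tau\bigr)-\tilder\Xi\bigl(g,\tilder\extd a,\tilde\xi(\tilder\tau)\bigr),
\]
which is precisely $(\tilde\xi\cdot\tilder\Xi)(g,\tilder\extd a(g),\tilder\tau(g))$, i.e.\ the derivative of the $g$-dependence of $\tilder\Xi$ along the left-invariant vector field $\tilde\xi$ with its arguments frozen. Hence (\ref{eqtaocov}) is equivalent to $\tilde\xi\cdot\tilder\Xi\equiv 0$ for all $\xi\in\cg$. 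Since $\{\tilde\xi(g):\xi\in\cg\}$ spans $T_gG$ at every point, this forces $\tilder\Xi(\,\cdot\,,\phi,\psi)$ to be locally constant, and connectedness of $G$ gives that $\tilder\Xi$ is constant on $G$. Conversely, if $\tilder\Xi$ is constant, running the same calculation backwards establishes (\ref{eqtaocov}).

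The one real obstacle is the bookkeeping step: showing that $\hat\nabla_{\xi\la\eta}\tau$ really does absorb the $\tilde\xi$-derivative of $\tilder\extd a$ under the $\tilde{\,\cdot\,}$ correspondence, so that after applying Lemma~\ref{PLGprop} the only unmatched piece is the one detecting $g$-dependence of $\tilder\Xi$. Everything else is a careful but routine translation between the two equivalent formulations of the data, together with the arbitrariness of prescribing $\tilder\extd a(g)$ and $\tilder\tau(g)$ freely in $\cg^*$ at any chosen $g\in G$.
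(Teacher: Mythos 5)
Your argument is correct and follows essentially the same route as the paper's proof: transport everything to $\cg^*$-valued functions in the right-invariant trivialisation, apply $\tilde\xi$, cancel the Poisson-bracket contributions via Lemma~\ref{PLGprop}, and identify the residue as the derivative of $\tilder\Xi$ in its $G$-argument, whence constancy by spanning of the $\tilde\xi(g)$ and connectedness. Your reduction to $\eta=\extd a$ supplies exactly the extension step the paper leaves as ``straightforward'', though note that the left side alone is \emph{not} manifestly $C^\infty(G)$-linear in $\eta$ (it produces an extra $\tilde\xi(a)\hat\nabla_\eta\tau$ term, which is precisely what the $\hat\nabla_{\xi\la\eta}\tau$ term absorbs), so it is the \emph{difference} of the two sides that is tensorial.
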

\begin{proof} In terms of $s\in C^\infty(G,\cg^*)$ in the right-translation invariant formulation,
\begin{align*} \big(\xi\la\tilder\nabla_{\extd a}s\big)(g)&=\tilde\xi(\{a,s\})+\tilde\xi(\tilder\Xi)\big(g,\tilder{(\extd a)}(g),s(g)\big)\\
&\quad {}+\tilder\Xi\big(g,\tilde\xi(\tilder{\extd a})(g),s(g)\big)+\tilder\Xi\big(g,\tilder{(\extd a)}(g),\tilde\xi(s)(g)\big)\\
&=\{\xi\la a,s\}+\{a,\xi\la s\}+\big(\delta^1_\xi\la a\big)\big(\delta^2_\xi\la s\big)\\
&\quad{}+\tilde\xi(\tilder\Xi)\big(g,\tilder{(\extd a)}(g),s(g)\big)+\tilder\Xi\big(g,\tilde\xi(\tilder{\extd a})(g),s(g)\big)+\tilder\Xi\big(g,\tilder{(\extd a)}(g),\tilde\xi(s)(g)\big)\\
&=\tilder\nabla_{\extd \xi\la a}s+\tilder\nabla_{\extd a}(\xi\la s)+\big(\delta^1_\xi\la a\big)\big(\delta^2_\xi\la s\big) +\tilde\xi(\tilder\Xi)\big(g,\tilder{(\extd a)}(g),s(g)\big)\end{align*}
using that $\xi\la$ is the action of $\tilde\xi$ on functions and commutes with $\extd$, being the Lie derivative along~$\tilde\xi$ on 1-forms. Note that $\tilde\xi(\tilder\Xi)$ is $\tilde\xi$ acting on $\tilder\Xi$ as a function on $G$ (its first argument). We see that
\begin{equation}\label{covcon}
\xi\la\hat\nabla_{\extd a}\extd b=\hat\nabla_{\extd(\xi\la a)}\extd b+\hat\nabla_{\extd a}\extd(\xi\la b)+\big(\delta_\xi^1\la a\big)\extd\big(\delta_\xi^2\la b\big)
\end{equation}
holds for all $\xi\in\cg$, $a,b\in C^\infty(G)$ if and only if $\tilder\Xi$ is constant in its first argument. This extends to general 1-forms as (\ref{eqtaocov}) in a straightforward manner, the details of which are omitted. \end{proof}

\begin{Remark}\label{actionconn} Let $G$ be connected and simply connected with Lie algebra $\cg$. Let $U(\cg)[[\lambda]]$ be a~deformation of $U(\cg)$ with coproduct of the form (\ref{Deltadef}), dual to $C^\infty(G)[[\lambda]]$ as a deformation of~$C^\infty(G)$ with product $*$ as in (\ref{proddef}). That the latter is a left module algebra appears to leading order as (\ref{eqPLGprop}). That a deformation $\Omega^1(G)[[\lambda]]$ is a left and right Hopf module for the $\bullet$ products as in (\ref{proddef}) appears as (\ref{covcon}). In fact, we only need these deformations to $\mathcal{O}\big(\lambda^2\big)$ so that the required notion of covariance applies even when $\hat\nabla^G$ has curvature, as explained in Section~\ref{secpredef}.
 \end{Remark}

\begin{Remark}\label{leftinvversion} In the original left translation-invariant conventions of \cite{BegMa:sem,MaTao:plg}, our covariance condition for a PLG in Lemma~\ref{PLGprop} comes out equivalently as
\begin{equation}\label{actionPBright}
\{ a,b\}\ra\xi=\{a\ra\xi,b\}+\{a,b\ra\xi\}+\big(a\ra\delta_\xi^1\big)\big(b\ra\delta^2_\xi\big).
\end{equation}
Similarly, for a left-translation covariant differential structure, we need
\begin{equation*}
\hat\nabla_{\extd a}\extd b\ra\xi=\hat\nabla_{\extd(a\ra\xi)}\extd b+\hat\nabla_{\extd a}\extd(b\ra\xi)+\big(a\ra\delta_\xi^1\big)\extd\big(b\ra\delta_\xi^2\big)
\end{equation*}
or on general 1-forms
\begin{equation}\label{generalactionconnright}
(\hat\nabla_{\eta}\tau)\ra\xi=\hat\nabla_{\eta\ra\xi}\tau+\hat\nabla_{\eta}(\tau\ra\xi)+i_{\tilde{\delta_\xi^1}}(\eta) \big(\tau\ra\delta_\xi^2\big),
\end{equation}
and this holds if and only if the original $\tilde\Xi$ in (\ref{mapXi}) is constant on $G$.
\end{Remark}

If one has both left and right covariance then we say that our contravariant connection is bicovariant.

\begin{Theorem}\label{bicovXi} A right-covariant contravariant connection given by $\tilde\nabla_{\extd a}s=\{a,s\}+\Xi\big(\tilde{\extd a},s\big)$ as in \eqref{mapXi} by $\Xi\colon \cg^*\tens \cg^*\to \cg^*$ is bicovariant if and only if
\begin{equation*}
\delta_{\mathfrak{g}^*}\Xi(\phi,\psi)-\Xi(\phi_{(1)},\psi)\tens\phi_{(2)}-\Xi(\phi,\psi_{(1)})\tens\psi_{(2)}=\psi_{(1)}\tens [\phi,\psi_{(2)}]_{\cg^*}, \end{equation*}
where $\delta_{\mathfrak{g}^*}(\phi)=\phi_{(1)}\tens\phi_{(2)}$, or equivalently in dual form,
\begin{equation*}
 \Xi^*_{[\eta,\xi]}= \big[\Xi^{*1}_\eta,\xi\big]\tens\Xi_\eta^{*2}+ \Xi^{*1}_\eta \tens\big[ \Xi_\eta^{*2},\xi\big] +\delta^1_\xi\tens \big[\eta,\delta^2_\xi\big]
\end{equation*}
for all $\eta,\xi\in \cg$, where $\Xi^*$, $\delta\colon \cg\to \cg\tens\cg$ are written explicitly $($with a sum of such terms understood$)$.
\end{Theorem}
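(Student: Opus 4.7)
The plan is to test the left-translation covariance condition~(\ref{generalactionconnright}) on the generating pair $\eta=\extd a$ and $\tau=\hatr w$, where $a\in C^\infty(G)$ is arbitrary and $w\in\cg^*$ is viewed as a constant section of $C^\infty(G,\cg^*)$, then evaluate at the identity $g=e$. Right-translation covariance is already encoded by $\Xi\colon\cg^*\tens\cg^*\to\cg^*$ being constant, so that $\tilder{(\hat\nabla_{\extd a}\hatr w)}(g)=\Xi\big(\tilder{\extd a}(g),w\big)$ since the Poisson bracket of $a$ with constant $w$ vanishes.

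To convert the remaining terms of~(\ref{generalactionconnright}) into the right-translation invariant picture I would first establish three auxiliary identities: (i)~$\hatr w\ra h=\hatr{{\rm Ad}^*_h w}$, infinitesimally $\hatr w\ra\xi=\hatr{{\rm ad}^*_\xi w}$, coming from $L_hR_{h^{-1}}=c_h$ and the definition of $\hatr{\ }$; (ii)~for any $1$-form $\omega$,
\[ \tilder{(\omega\ra\xi)}(g)={\rm ad}^*_\xi\tilder\omega(g)+\tilder\xi(\tilder\omega)(g),\]
derived from $\tilder\omega(g)=R^*_g\omega(g)$ together with~(i); and (iii)~$\tilder{\extd(a\ra\xi)}(g)=\tilder\xi(\tilder{\extd a})(g)+{\rm ad}^*_\xi\tilder{\extd a}(g)$ as the special case $\omega=\extd a$ of~(ii), using that $\ra\xi=\CL_{\tilder\xi}$ commutes with $\extd$.

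Plugging these into~(\ref{generalactionconnright}), the $\Xi(\tilder\xi(\tilder{\extd a})(g),w)$ contributions cancel between the two sides. Evaluating at $g=e$ with $\phi=\tilder{\extd a}(e)\in\cg^*$ and $\psi=w\in\cg^*$ arbitrary yields
\[ {\rm ad}^*_\xi\Xi(\phi,\psi)-\Xi({\rm ad}^*_\xi\phi,\psi)-\Xi(\phi,{\rm ad}^*_\xi\psi)=\langle\delta^1_\xi,\phi\rangle\,{\rm ad}^*_{\delta^2_\xi}\psi\]
for every $\xi\in\cg$. Using ${\rm ad}^*_\eta\chi=\langle\chi\o,\eta\rangle\chi\t$ with $\delta_{\cg^*}(\chi)=\chi\o\tens\chi\t$ the Lie cobracket on $\cg^*$ dual to the bracket on $\cg$, together with antisymmetry of $\delta_{\cg^*}$ and the duality $\langle[\phi,\psi']_{\cg^*},\xi\rangle=\langle\phi,\delta^1_\xi\rangle\langle\psi',\delta^2_\xi\rangle$, this is exactly the pairing of the claimed identity against $\xi$ in the second tensor slot; undoing the pairing over arbitrary $\xi$ yields the stated identity in $\cg^*\tens\cg^*$. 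The converse direction runs the argument backwards on the generating pair and extends to general $1$-forms by $C^\infty(G)$-bilinearity and the Leibniz rules built into $\hat\nabla$, $\la$ and $\ra$.

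The dual formulation on $\Xi^*\colon\cg\to\cg\tens\cg$ follows by transposing throughout, using that the dual of $\delta_{\cg^*}$ is the Lie bracket on $\cg$ and the dual of $[\,,\,]_{\cg^*}$ is the cocycle $\delta$: the three left-hand terms become $\Xi^*_{[\eta,\xi]}$, $[\Xi^{*1}_\eta,\xi]\tens\Xi^{*2}_\eta$ and $\Xi^{*1}_\eta\tens[\Xi^{*2}_\eta,\xi]$, and the right-hand term becomes $\delta^1_\xi\tens[\eta,\delta^2_\xi]$. The main obstacle will be the careful bookkeeping between the $\tilde{\ }$ and $\tilder{\ }$ conventions and the coadjoint actions mediating between the left- and right-translation pictures, with attention to signs coming from antisymmetry of $\delta_{\cg^*}$; once (i)--(iii) are in place the computation is essentially forced and the Lie-bialgebra character of the result emerges transparently.
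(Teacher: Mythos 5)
Your proposal is correct and follows essentially the same route as the paper's proof: both reduce the second covariance condition, written in a one-sided invariant trivialisation of $\Omega^1(G)$ where the infinitesimal action appears as a derivative along an invariant vector field plus a coadjoint term, to the pointwise identity ${\rm ad}^*_\xi\Xi(\phi,\psi)=\Xi({\rm ad}^*_\xi\phi,\psi)+\Xi(\phi,{\rm ad}^*_\xi\psi)+\langle\delta^1_\xi,\phi\rangle\,{\rm ad}^*_{\delta^2_\xi}\psi$, and then pair against $\xi$ and dualise. The only differences are cosmetic: you work in the mirror (right-invariant) trivialisation with the $\ra\xi$ form of the covariance condition and take $\tau$ right-invariant so that the $\{a,s\}$ term drops out, whereas the paper keeps a general section $s$ and invokes Lemma~\ref{PLGprop} to cancel that term.
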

\begin{proof} Under the first identification $\Omega^1(G)$ with $C^\infty(G,\cg^*)$ in Section~\ref{PLGprelim}, the left action on 1-forms appears as $h\la s= L_g^*R_h^*L^*_{h^{-1}g^{-1}}(s(gh))$ which differentiates ${\extd \over\extd t}\big|_0$ for $h=e^{t\xi}$ to $(\xi\la s)(g)=\tilde\xi(s)(g)+ {\rm ad}_\xi^*(s(g))$ where ${\rm ad}_\xi(\phi)=\phi\o\<\phi\t,\xi\>$ in terms of the Lie cobracket on~$\cg^*$. Also note that $\xi\la\extd a=\extd\tilde\xi(a)$ as the action commutes with~$\extd$. Then the left-covariance in the form used in Proposition~\ref{taocovarianceprop} appears in these terms as the condition
\begin{align*} \tilde\xi(\{a,s\})&+{\rm ad}_\xi^*(\{a,s\})+ \tilde\xi\big(\Xi(\tilde{\extd a},s)\big) + {\rm ad}_\xi^*\Xi\big(\tilde{\extd a},s\big)\\
&=\{\tilde\xi(a),s\}+\Xi\big(\widetilde{\extd} \big(\tilde\xi(a)\big),s\big)+ \big\{a,\tilde\xi(s)+ {\rm ad}_\xi^*(s)\big\}+\Xi\big(\tilde{\extd a},\tilde\xi(s)+{\rm ad}_\xi^*(s)\big)\\
&\quad+\tilde{\delta^1_\xi}(a)\big(\tilde{\delta^2_\xi}(s)+{\rm ad}^*_{\delta^2_\xi}(s)\big).
\end{align*}
We now expand $\tilde\xi\{a,s\}$ using the PLG covariance in Lemma~\ref{PLGprop} (noting that $s$ is a $\cg^*$-valued function under our trivialisation of the bundle) and we expand $\tilde\xi\big(\Xi(\tilde{\extd a},s)\big)=\Xi\big(\tilde\xi\big(\tilde{\extd a}\big),s\big)+\Xi\big(\tilde{\extd a},\tilde\xi(s)\big)$ since~$\Xi$ itself is constant on~$G$, and cancel terms. Then our condition becomes
\begin{gather*}
{\rm ad}_\xi^*(\{a,s\})+ {\rm ad}_\xi^*\Xi\big(\tilde{\extd a},s\big)+ \Xi\big(\tilde\xi\big(\tilde{\extd a}\big),s\big)\\
\qquad{} =\Xi\big(\widetilde{\extd} \big(\tilde\xi(a)\big),s\big)+ \{a, {\rm ad}_\xi^*(s)\}+\Xi\big(\tilde{\extd a},{\rm ad}_\xi^*(s)\big)+\tilde{\delta^1_\xi}(a){\rm ad}^*_{\delta^2_\xi}(s).
\end{gather*}
Next, we note that ${\rm ad}_\xi^*(\{a,s\})= \{a, {\rm ad}_\xi^*(s)\}$ as ${\rm ad}^*$ acts only on the $\cg^*$ values and that $\widetilde{\extd} \big(\tilde\xi(a)\big)- \tilde\xi\big(\tilde{\extd a}\big)={\rm ad}_\xi^*\big(\tilde\extd a\big)$ deduced from formulae in the preliminaries (or by expanding $\extd a=\del_i(a)f^i$ where $\del_i=\tilde{e_i}$ in a basis of the Lie algebra with dual basis $\{f^i\}$). Using these identities, our condition for bicovariance reduces to
\[ {\rm ad}_\xi^*\Xi\big(\tilde{\extd a},s\big)=\Xi\big({\rm ad}_\xi^*\big(\tilde{\extd a}\big),s\big)+\Xi\big(\tilde{\extd a},{\rm ad}_\xi^*(s)\big)+\tilde{\delta^1_\xi}(a){\rm ad}^*_{\delta^2_\xi}(s),\]
which being true for all $s$ and all $a$ reduces to
\[ {\rm ad}_\xi^*\Xi(\phi,\psi)=\Xi({\rm ad}_\xi^*\phi,\psi)+\Xi(\phi,{\rm ad}_\xi^*\psi)+\big\langle\delta^1_\xi,\phi\big\rangle {\rm ad}^*_{\delta^2_\xi}\psi\]
for all $\phi,\psi\in\cg^*$, and for all $\xi\in\cg$. This is the first stated form of the condition in terms of the Lie cobracket (when evaluated against $\xi$ in the second tensor factor and converting the bracket on $\cg^*$ to a cobracket on $\xi$). \end{proof}

The stated characterisation of bicovariant $\Xi$ was obtained in the pre-Lie algebra flat connection case in~\cite{MaTao:plg}, but now we see that the same holds in general using our new methods.

\begin{Example}\label{exSU2} At the algebraic level, we consider the standard Hopf algebra $\mathbb{C}_q[{\rm SL}_2]$ with generators $a$, $b$, $c$, $d$ and relations
\begin{gather*} ba=qab, \qquad ca=qac, \qquad db=qbd, \qquad dc=qcd, \qquad bc=cb, \\
 ad-da=\big(q^{-1}-q\big)bc, \qquad ad-q^{-1}bc=1, \end{gather*} and a standard matrix form of coproduct and antipode~\cite{Ma:book}. There is a left translation covariant $\Omega^1(\mathbb{C}_q[{\rm SU}_2])$ with~\cite{Wor}
\[ e^0=d\extd a-qb\extd c,\qquad e^+=q^{-1}a\extd c-q^{-2}c\extd a,\qquad e^-=d\extd b-qb\extd d \]
a left basis of left-invariant 1-forms and the right module structure given by the bimodule relations
\[ e^0f=q^{2|f|}fe^0, \qquad e^{\pm}f=q^{|f|}e^{\pm},\]
for homogeneous $f$ of degree $|f|$, where $|a|=|c|=1$ and $|b|=|d|=-1$. The exterior derivative is
\[ \extd a=ae^0+qbe^+,\qquad \extd b=ae^--q^{-2}be^0,\qquad \extd c=ce^0+qde^+,\qquad \extd d=ce^--q^{-2}de^0.\]
We now set $q=e^{\frac{\lambda}{2}}$, for $\lambda$ a deformation parameter and work at the corresponding semiclassical level albeit focussing on the polynomial subalgebra generators. Classically, the basis $e^0$, $e^\pm$ of 1-forms is dual to the basis $\big\{\tilde H,\tilde X_\pm\big\}$ of left-invariant vector fields generated by the Chevalley basis $\lbrace H,X_{\pm}\rbrace$ of $\mathfrak{su}_2$. The latter can be read off from $\extd f=\tilde{H}(f)e^0+\tilde{X_\pm}(f)e^\pm$ (sum over $\pm$) as
\[ \tilde H\begin{pmatrix} a & b \\ c & d \end{pmatrix}=\begin{pmatrix} a & -b \\ c & -d \end{pmatrix},\qquad \tilde X_+\begin{pmatrix} a & b \\ c & d \end{pmatrix}=\begin{pmatrix} 0 & a \\ 0 & c \end{pmatrix},\qquad \tilde X_-\begin{pmatrix} a & b \\ c & d \end{pmatrix}=\begin{pmatrix} b & 0 \\ d & 0 \end{pmatrix}. \]
The left coaction covariance of the calculus corresponds in Remark~\ref{leftinvversion} to covariance under a~right action of the Lie algebra, with right-invariant vector fields
\[ \tilder H\begin{pmatrix} a & b \\ c & d \end{pmatrix}=\begin{pmatrix} a & b \\ -c & -d \end{pmatrix},\qquad \tilder X_+\begin{pmatrix} a & b \\ c & d \end{pmatrix}=\begin{pmatrix} c & d \\ 0 & 0 \end{pmatrix},\qquad \tilder X_-\begin{pmatrix} a & b \\ c & d \end{pmatrix}=\begin{pmatrix} 0 & 0 \\ a & b \end{pmatrix}. \]

(1) From the algebra relations, the Poisson bracket can be read off on the generators as
\begin{gather*} \lbrace a,b\rbrace=-\tfrac{1}{2} ab,\qquad\! \lbrace a,c\rbrace=-\tfrac{1}{2}ac,\qquad\! \lbrace a,d\rbrace=-bc, \qquad\! \lbrace b,d\rbrace=-\tfrac{1}{2}bd,\qquad\! \lbrace c,d\rbrace=-\tfrac{1}{2}cd\end{gather*}
and $\lbrace b,c\rbrace=0$, from which one can easily verify~(\ref{actionPBright}) as a check on calculations, with $\delta(X_\pm)={1\over2}(X_\pm\tens H-H\tens X_\pm)$ and $\delta(H)=0$.

(2) From the bimodule relations, \cite{MaTao:plg} provides the Poisson-compatible contravariant connection as
\[ \hat\nabla_{\extd \left(\begin{smallmatrix} a & b \\ c & d \end{smallmatrix}\right)}e^i={1\over 2}t_i \begin{pmatrix} a & -b \\ c & -d \end{pmatrix}e^i,\qquad t_0=-2,\qquad t_\pm=-1, \]
where $i\in\lbrace 0,\pm\rbrace$, from which one can easily verify~(\ref{generalactionconnright}) as a check.
\end{Example}

\section{PLG actions on Poisson manifolds}\label{PLGact}

As a step towards principal bundles, we now `polarise' the above to a general Poisson mani\-fold~$X$ right acted upon by a Lie group $G$ with action
\[ \beta\colon \ X\times G\to X,\qquad \beta(x,g)=x.g,\qquad\forall\, x\in X, g\in G.\]
As before, we work in a coordinate algebra language with $P=C^\infty(X)$ with action $(g\la f)(x)=f(x.g)$ for all $x\in X$, $g\in G$ and $f\in P$. If we have a Hopf algebraic version $H=\C[G]$ of $C^\infty(G)$ then correspondingly $P$ would be a right $H$-comodule algebra with coaction $\Delta_Rp=p\rz\tens p\co\in P\tens H$.

As in Drinfeld's theory, we now suppose further that $G$ is a PLG and ask that $\beta$ is a Poisson map where $X\times G$ has the direct product Poisson structure~\cite{Lu}, which we refer to as a {\em PLG action}. Note that in classical Poisson geometry, where~$G$ is just a Lie group, it is more normal to consider each map $\beta(\ ,g)\colon X\to X$ as a Poisson map, which effectively would mean the zero Poisson bracket on~$G$.

Adopting a similar notation as in the preceding section, we let $\pi_X$ and $\pi_G$ be the respective Poisson tensors and we define
\[ \beta_g\colon \ X\to X,\qquad \beta_x\colon \ G\to X,\qquad \beta_g(x)=\beta_x(g)=x.g\]
with differentials
\[ \beta_{g*}\colon \ T_xX\rightarrow T_{x.g}X, \qquad \beta_{x*}\colon \ T_gG\rightarrow T_{x.g}X, \]
and extensions to act on tensor products in the usual way. Then the condition for a PLG action is clearly
\begin{equation*}
\pi_X(x\cdot g)=\beta_{g*}\pi_X(x)+\beta_{x*}\pi_G(g), \end{equation*}
for all $x\in X$, $g\in G$. Following our previous notation, we also denote by $\tilde\xi=\xi\la$ the vector field associated to $\xi\in\cg$ for the infinitesimal action on $X$. This is the left action of $g=e^{t\xi}$ on functions differentiated in~$t$ at $t=0$, i.e., $\tilde\xi(x)=\beta_{x*}(\xi)$ if we view $\cg=T_eG$. Since these vector fields are defined for each element~$\xi$, we can think of them all together as a single map
\begin{equation*}
\ver\colon \ \Omega^1(X)\rightarrow C^{\infty}(X)\tens\mathfrak{g}^*\end{equation*} such that when evaluated against element $\xi$ of the Lie algebra we recover $\tilde\xi$ in the form $\ver_\xi=(\id\tens\xi)\ver=i_{\tilde\xi}\colon \Omega^1(X)\to C^\infty(X)$. Our first observation is a known result but cast in a language closer to a quantum group module algebra.

\begin{Lemma}[{cf.~\cite[p.~54]{KS}}]\label{PLGactionprop}
Let $G$ be a connected Poisson--Lie group, $X$ a Poisson manifold and $\beta$ a smooth right action of the group~$G$ on~$X$. This is a PLG action if and only if
 \begin{equation}\label{eqPLGactionprop}
\xi\triangleright\lbrace p,q\rbrace=\lbrace \xi\triangleright p,q\rbrace+\lbrace p,\xi\triangleright q\rbrace+\big(\delta_\xi^1\triangleright p\big)\big(\delta_\xi^2\triangleright q\big)
\end{equation}
holds for all $p,q\in C^\infty(X)$, $\xi\in \cg$. \end{Lemma}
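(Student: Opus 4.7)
The proof plan parallels that of Lemma \ref{PLGprop} closely, with the Poisson tensor on the group replaced by $\pi_X$ and the generator $\tilde\xi$ reinterpreted as the fundamental vector field of the right $G$-action, $\tilde\xi(x)=\beta_{x*}\xi\in T_xX$. Writing $\{p,q\}(x)=\langle\pi_X(x),\extd p\otimes\extd q\rangle$ and applying the biderivation property, I begin from
\[
\xi\la\{p,q\}=\tilde\xi(\{p,q\})=\langle\mathcal{L}_{\tilde\xi}\pi_X,\extd p\otimes\extd q\rangle+\{\xi\la p,q\}+\{p,\xi\la q\},
\]
so that the target identity (\ref{eqPLGactionprop}) reduces, at every $x$, to the bivector equation $(\mathcal{L}_{\tilde\xi}\pi_X)(x)=\beta_{x*}\delta\xi$.

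For the ``only if'' direction, I expand
\[
(\mathcal{L}_{\tilde\xi}\pi_X)(x)=\frac{\extd}{\extd t}\Big|_0\beta_{e^{-t\xi}*}\pi_X\bigl(x\cdot e^{t\xi}\bigr),
\]
and substitute the PLG-action hypothesis $\pi_X(x\cdot e^{t\xi})=\beta_{e^{t\xi}*}\pi_X(x)+\beta_{x*}\pi_G(e^{t\xi})$. The first piece becomes $\beta_{e^{-t\xi}*}\beta_{e^{t\xi}*}\pi_X(x)=\pi_X(x)$ by the chain rule applied to the action and is $t$-independent. The second piece $\beta_{e^{-t\xi}*}\beta_{x*}\pi_G(e^{t\xi})$ vanishes at $t=0$ because $\pi_G(e)=0$, so its derivative is just $\beta_{x*}\delta\xi=\tilde{\delta^1_\xi}(x)\otimes\tilde{\delta^2_\xi}(x)$. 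Contracting against $\extd p\otimes\extd q$ yields the missing term $(\delta_\xi^1\la p)(x)(\delta_\xi^2\la q)(x)$ in (\ref{eqPLGactionprop}).

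Conversely, (\ref{eqPLGactionprop}) holding for all $p,q$ gives $(\mathcal{L}_{\tilde\xi}\pi_X)(x)=\beta_{x*}\delta\xi$ pointwise, which is the $g=e$ infinitesimal shadow of the sought finite identity. The main obstacle is integrating this to the global statement $F(x,g):=\pi_X(x\cdot g)-\beta_{g*}\pi_X(x)-\beta_{x*}\pi_G(g)=0$. Following the integration step in the proof of Lemma \ref{PLGprop}, I apply the infinitesimal identity at the point $x\cdot g$ and use associativity of the action $(xg)\cdot e^{t\xi}=x\cdot(ge^{t\xi})$ together with $\beta_{xg\,*}=\beta_{x*}\circ L_{g*}$ at $e$, plus the already-established cocycle identity on $G$ for $\pi_G$ (Lemma \ref{PLGprop} itself). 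These combine to show that the $t$-derivative of $F(x,ge^{t\xi})$ at $t=0$ vanishes for all $g$ and $\xi$. Since $F(x,e)=0$ and the left-invariant vector fields $\tilde\xi$ span every tangent space of $G$, the connectedness of $G$ forces $F\equiv 0$, which is precisely the PLG-action condition.
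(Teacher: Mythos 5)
Your argument is correct and is exactly the ``same pattern as Lemma~\ref{PLGprop}'' that the paper invokes before omitting all details and deferring to \cite[p.~54]{KS}: the forward direction via $(\CL_{\tilde\xi}\pi_X)(x)=\beta_{x*}\delta\xi$, and the converse by integrating the infinitesimal identity over connected $G$ using the cocycle property of $\pi_G$. The only informality --- differentiating the bivector-valued $F(x,ge^{t\xi})$ without first transporting it to a fixed tangent space --- is the same shorthand the paper itself uses in the proof of Lemma~\ref{PLGprop}, so nothing further is needed.
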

\begin{proof} This follows the same pattern as in the proof of Lemma~\ref{PLGprop} and is, moreover, equivalent to~\cite[p.~54]{KS}. Hence we omit any details.
\end{proof}

There is clearly an analogous analysis to \cite{BegMa:sem} as to the natural notion of a Poisson-compatible contravariant connection $\hat\nabla^X$ on $X$ being compatible, polarising the case of $G$ acting on itself with contravariant connection~$\hat\nabla^G$. Without giving such an analysis here, we take the polarised version of~(\ref{eqtaocov}) as a definition, i.e.,
\begin{Definition}\label{concovX} Let $G$ be a connected PLG on a PLG-covariant Possion manifold~$X$. A contravariant connection $\hat\nabla$ on $\Omega^1(X)$ is similarly {\em $G$-covariant} if and only if
\begin{equation}\label{eq2concovX}
\xi\la\hat{\nabla}_{\eta}\tau=\hat{\nabla}_{\xi\la\eta}\tau+\hat{\nabla}_{\eta}(\xi\la\tau)+(i_{\tilde{\delta_\xi^1}}\eta) \big(\delta_\xi^2\la\tau\big),
\end{equation}
for all $\xi\in \cg$ and $\eta,\tau\in \Omega^1(X)$.
\end{Definition}

\begin{Remark}\label{remomegaXcov} One can check that this is compatible with Lemma~\ref{PLGactionprop} and the axioms of a contravariant connection in the sense that $\xi\la\hat\nabla_{\extd p}(q\tau)=\xi\la\big(\{p,q\}\tau+q\hat\nabla_{\extd p}\tau\big)$ expanded using~(\ref{eq2concovX}) on the left hand side agrees with~(\ref{eqPLGactionprop}) and $\xi\la\hat\nabla_{\extd p}\tau$ expanded separately. It similarly respects Poisson-compatibility, e.g., on exact forms we have
\begin{equation*}
\xi\triangleright\hat{\nabla}_{\extd p}\extd q=\hat{\nabla}_{\extd(\xi\triangleright p)} \extd q+\hat{\nabla}_{\extd p}\extd(\xi\triangleright q)+\big(\delta_\xi^1\triangleright p\big)\extd\big(\delta_\xi^2\triangleright q\big), \end{equation*}
for all $\xi\in\cg$ and $p,q\in C^\infty(X)$. Subtracting the same with $p\leftrightarrow q$ gives $\extd\{p,q\}$ on using~(\ref{eqPLGactionprop}), antisymmetry of the output of $\delta_\xi$ and the Poisson-compatibility applied to $\extd\{\xi\la p,q\}$ and $\extd\{p,\xi\la q\}$ separately. The parallel of Remark~\ref{actionconn} also applies. If $U(\cg)[[\lambda]]$ acts on $C^\infty(X)[[\lambda]]$ in a deformation context then $\xi\la(p * q-q*p)$ computed as a module algebra gives the PLG action condition~(\ref{eqPLGactionprop}) from the order $\lambda$ terms. Similarly, if this action extends to a deformed calculus $\Omega^1(X)[[\lambda]]$ then $\xi\la(p\bullet\extd q-\extd q\bullet p)$ computed as a module algebra requires~(\ref{eq2concovX}). This then generalises to arbitrary 1-forms as~(\ref{eq2concovX}). In fact, we only need these deformations to $\mathcal{O}\big(\lambda^2\big)$ as our contravariant connections could be curved.
\end{Remark}

To explain the next corollary, note that the Lie derivative can be formally extended to one along antisymmetric tensors $V$ by the formula $\CL_V=i_V\extd -(-1)^{|V|} \extd i_V$ where $i_V\colon \Omega(X)\to \Omega(X)$ lowers degree by that of~$V$. In particular, $i_{v\wedge w}=i_v i_w$ (by which we mean a sum of such terms) depends antisymmetrically on $v$, $w$ and so descends to the wedge product, giving a well-defined degree $-2$ interior product on the exterior algebra. This in turn defines a Lie derivative $\CL_{v\wedge w}=[i_{v\wedge w},\extd]$ along antisymmetric bivector fields. We will also need, as in~\cite{Ma:rec}, the Leibnizator $L_B(\tau,\eta)=B(\tau\wedge\eta)-(B\tau)\wedge\eta-(-1)^{|B||\tau|}\tau\wedge B\eta$ for $B$ an operator on the exterior algebra of degree $|B|$.

\begin{Corollary}\label{corconcovX} Let $\hat\nabla$ be a PLG-covariant contravariant connection on $\Omega^1(X)$ in the sense of Definition~{\rm \ref{concovX}}. If this is Poisson-compatible then
\begin{equation*}
\xi\la[\tau,\eta]=[\xi\la\tau,\eta]+[\tau,\xi\la\eta]+\tfrac{1}{2} L_{\CL_{\tilde{\delta\xi}}}(\tau,\eta)\end{equation*}
also holds for all $\eta,\tau\in \Omega^1(X)$ and $\xi\in \cg$.
\end{Corollary}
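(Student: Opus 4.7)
The plan is to begin with Poisson-compatibility~(\ref{precon2}), which gives $[\tau,\eta]=\hat\nabla_\tau\eta-\hat\nabla_\eta\tau$, and apply $\xi\la$ to both sides, using the $G$-covariance condition~(\ref{eq2concovX}) on each term. Writing $\delta(\xi)=\delta_\xi^1\tens\delta_\xi^2$ (sum understood and antisymmetric), we get
\[\xi\la\hat\nabla_\tau\eta=\hat\nabla_{\xi\la\tau}\eta+\hat\nabla_\tau(\xi\la\eta)+\bigl(i_{\tilde{\delta_\xi^1}}\tau\bigr)\bigl(\delta_\xi^2\la\eta\bigr),\]
and symmetrically with $\tau\leftrightarrow\eta$. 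Subtracting and using Poisson-compatibility in reverse to recombine the four $\hat\nabla$-terms into $[\xi\la\tau,\eta]+[\tau,\xi\la\eta]$, we land on
\[\xi\la[\tau,\eta]=[\xi\la\tau,\eta]+[\tau,\xi\la\eta]+E,\qquad E:=\bigl(i_{\tilde{\delta_\xi^1}}\tau\bigr)\bigl(\delta_\xi^2\la\eta\bigr)-\bigl(i_{\tilde{\delta_\xi^1}}\eta\bigr)\bigl(\delta_\xi^2\la\tau\bigr).\]
Since $\delta_\xi^2\la=\CL_{\tilde{\delta_\xi^2}}$ on 1-forms, setting $u=\tilde{\delta_\xi^1}$, $v=\tilde{\delta_\xi^2}$ (sum understood, antisymmetric in $u,v$), this is $E=(i_u\tau)\CL_v\eta-(i_u\eta)\CL_v\tau$.

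The main task is then to show $E=\tfrac12 L_{\CL_{\tilde{\delta\xi}}}(\tau,\eta)$. Since $\tilde{\delta\xi}$ has degree $2$, the operator $B=\CL_{\tilde{\delta\xi}}=[i_{\tilde{\delta\xi}},\extd]$ has degree $-1$, so the Leibnizator on 1-forms reads
\[L_B(\tau,\eta)=B(\tau\wedge\eta)-(B\tau)\wedge\eta+\tau\wedge B\eta.\]
One unpacks $i_{u\wedge v}=i_ui_v$ (sum) as a second-order antiderivation on the exterior algebra. For a 1-form $\tau$ one has $B\tau=i_ui_v\extd\tau$ since $i_ui_v\tau=0$. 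For the 2-form $\tau\wedge\eta$, expanding $\extd(\tau\wedge\eta)=\extd\tau\wedge\eta-\tau\wedge\extd\eta$ and applying the graded Leibniz rule for $i_u$ and $i_v$, followed by $\extd i_ui_v(\tau\wedge\eta)$ via the Cartan formula $\extd i_w=\CL_w-i_w\extd$ on each factor, the terms involving $i_ui_v\extd\tau$, $i_ui_v\extd\eta$ and four `mixed' terms like $(i_v\extd\tau)(i_u\eta)$ cancel pairwise, leaving
\[\CL_{u\wedge v}(\tau\wedge\eta)=(i_ui_v\extd\tau)\eta-(i_ui_v\extd\eta)\tau-(i_v\tau)\CL_u\eta-(\CL_v\tau)(i_u\eta)+(i_u\tau)\CL_v\eta+(\CL_u\tau)(i_v\eta).\]
Subtracting $(B\tau)\wedge\eta$ and adding $\tau\wedge B\eta$ cancels the first two terms. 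Finally, antisymmetry of $u\otimes v$ collapses the remaining four into $L_B(\tau,\eta)=2(i_u\tau)\CL_v\eta-2(i_u\eta)\CL_v\tau=2E$, as required.

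The main obstacle is the bookkeeping of signs in the Cartan-type expansion of $\CL_{u\wedge v}(\tau\wedge\eta)$, but this is forced and the cancellations are dictated purely by the graded anti-derivation rules; the essential content of the identity is the algebraic rearrangement that uses antisymmetry of $\delta\xi$ to convert the apparently asymmetric $E$ into the symmetric four-term Leibnizator expression.
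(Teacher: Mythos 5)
Your proposal is correct and follows essentially the same route as the paper: apply the covariance condition (\ref{eq2concovX}) to both terms of the Poisson-compatibility identity, then identify the leftover $\delta_\xi$ terms with $\tfrac12 L_{\CL_{\tilde{\delta\xi}}}(\tau,\eta)$ by expanding $\CL_{u\wedge v}(\tau\wedge\eta)$ and using antisymmetry of $\delta\xi$. Your final expression $2(i_u\tau)\CL_v\eta-2(i_u\eta)\CL_v\tau$ agrees with the paper's $2i_u(\tau)\CL_v\eta+2i_v(\eta)\CL_u\tau$ under that antisymmetry, so the argument matches.
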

\begin{proof}
Recall that Poisson-compatibility of the connection is $[\tau,\eta]=\hat\nabla_\tau\eta-\hat\nabla_\eta\tau$. Hence in the covariant case,
\begin{align*}
\xi\la[\tau,\eta]& =\xi\la(\hat\nabla_\tau\eta-\hat\nabla_\eta\tau)\\
&=\hat\nabla_{\xi\la\tau}\eta+\hat\nabla_\tau(\xi\la\eta)+\big(i_{\tilde\delta_\xi^1}\tau\big) \big(\delta^2_\xi\la\eta\big)-\hat\nabla_{\xi\la\eta}\tau-\hat\nabla_\eta(\xi\la\tau)-\big(i_{\tilde\delta_\xi^1}\eta\big) \big(\delta^2_\xi\la\tau\big)\\
& =[\xi\la\tau,\eta]+[\tau,\xi\la\eta]+\big(i_{\tilde\delta_\xi^1}\tau\big) \big(\delta^2_\xi\la\eta\big)-\big(i_{\tilde\delta_\xi^1}\eta\big)\big(\delta^2_\xi\la\tau\big).
\end{align*}
Moreover, if $v\wedge w$ is an antisymmetric product of vector fields on $X$ and $\tau$, $\eta$ are 1-forms, we have
\begin{align*}
\CL_{v\wedge w}(\tau\wedge\eta)&=i_vi_w(\extd\tau\wedge \eta-\tau\wedge\extd\eta)-\extd(i_{v}i_w(\tau\wedge\eta))\\
&=i_{v\wedge w}(\extd \tau)\eta-i_v(\eta)i_w(\extd \tau)+i_v(\extd \tau)i_w(\eta)-i_v(\extd\eta)i_w(\tau)\\
&\quad{}+i_v(\tau)i_w(\extd\eta)-i_{v\wedge w}(\extd\eta)\tau -\extd(i_v(\eta)i_w(\tau)-i_v(\tau)i_w(\eta))\\
&=i_{v\wedge w}(\extd \tau)\eta -i_{v\wedge w}(\extd\eta)\tau + 2 i_v(\tau)(i_w(\extd \eta)+\extd i_w(\eta))+2 i_w(\eta)(i_v(\extd \tau)+\extd i_v(\tau))\!\\
&=i_{v\wedge w}(\extd \tau)\eta -i_{v\wedge w}(\extd\eta)\tau + 2 i_v(\tau)\CL_w\eta+2 i_w(\eta)\CL_v\tau \end{align*}
so that
\[ i_v(\tau)\CL_w\eta+ i_w(\eta)\CL_v\tau=\tfrac{1}{2} \big( \CL_{v\wedge w}(\tau\wedge\eta)-( i_{v\wedge w}\extd \tau)\eta+ \tau (i_{v\wedge w}\extd\eta)\big)=\tfrac{1}{2} L_{\CL_{v\wedge w}}(\tau,\eta)\]
allowing us to write the $\delta_\xi$ terms as stated in terms of the bivector $\tilde{\delta_\xi}=\tilde{\delta_\xi^1}\wedge\tilde{\delta_\xi^2}$.
\end{proof}

This formula reduces to $\extd$ applied to the $\xi\la\{a,b\}$ covariance identity if $\tau$, $\eta$ are exact. It also extends in principle to higher degree forms.

\begin{Remark}\label{RemarkPLGactright} We also have a right-covariance condition for a PLG left action $\alpha\colon G\times X\to X$, namely
\begin{equation*}
\lbrace p,q\rbrace\ra\xi=\lbrace p\ra\xi,q\rbrace+\lbrace p,q\ra\xi\rbrace+\big(p\ra\delta_\xi^1\big)\big(q\ra\delta_\xi^2\big),
\end{equation*}
where $\ra\xi$ on functions is the vector field $\tilder\xi$ for the infinitesimal action. The covariance of the calculus and the consequence of Poisson-compatibility now appear as
\begin{gather*} (\hat{\nabla}_{\eta}\tau)\ra\xi=\hat{\nabla}_{\eta\ra\xi}\tau +\hat{\nabla}_{\eta}(\tau\ra\xi)+(i_{\tilder{\delta_\xi^1}}\eta)\big(\tau\ra\delta_\xi^2\big),\\
 [\tau,\eta]\ra\xi=[\tau\ra\xi,\eta]+[\tau,\eta\ra\xi]+\tfrac{1}{2} L_{\CL_{\tilder{\delta \xi}}}(\tau,\eta).
 \end{gather*}
\end{Remark}

\begin{Example}\label{exS2PB}
Here we consider the right coaction $\Delta_R(f)=f\tens t^{|f|}$ of $H=\C_{q^2}\big[S^1\big]=\C\big[t,t^{-1}\big]$ with $\extd t.t=q^2t\extd t$ on $f\in P=\C_q[{\rm SU}_2]$ with its calculus and notations as in Example~\ref{exSU2}. At the semiclassical level, $G=S^1$ acting from the right on $X={\rm SU}_2$ (as a diagonal subgroup) as a PLG with the zero Poisson bracket, so $\delta=0$. From the calculus, $\extd t.t=(1+{\lambda})t.\extd t+\mathcal{O}\big(\lambda^2\big)$ gives the Poisson-compatible contravariant connection
\begin{equation*}
\hat\nabla^G_{\extd t}\extd t=-t\extd t\end{equation*}
on $\Omega^1\big(S^1\big)$. This has left-invariant basic 1-form $t^{-1}\extd t$ with dual $\tilde\xi=t{\del\over{\del t}}$ as a left-invariant vector field. Then $\hat\nabla_{t^{-1}\extd t}\big(t^{-1}\extd t\big)=-t^{-1}\extd t$, which translates to
\begin{equation}\label{conS2} \Xi_\xi^*=-\xi\tens\xi.\end{equation}
It is also clear from the form of the coaction that $\tilde{\xi}(f)=|f|f$ so that $\tilde\xi=H\la(\ )=\tilde H$ as a~vector field for the infinitesimal action on ${\rm SU}_2$ using the partial derivative in the $e^0$ direction as given in Example~\ref{exSU2}. The Poisson bracket on ${\rm SU}_2$ in Example~\ref{exSU2} is then easily seen to be right covariant for the action of~$S^1$. This is well-known and details are omitted.
For the covariance condition in Definition~\ref{concovX}, we compute $H\la e^0=0$, $H\la e^+=2e^+$, $H\la e^-=-2e^-$. Then, for example
\begin{align*}H\la\hat\nabla_{\extd\left(\begin{smallmatrix} a & b \\ c & d \end{smallmatrix}\right)}e^+&=H\la\left({1\over 2}t_+ \begin{pmatrix} a & -b \\ c & -d \end{pmatrix}e^+\right)=-{1\over 2} \begin{pmatrix} a & b \\ c & d \end{pmatrix}e^+- \begin{pmatrix} a & -b \\ c & -d \end{pmatrix}e^+\\
&=\hat\nabla_{\extd\left(H\la\left(\begin{smallmatrix} a & b \\ c & d \end{smallmatrix}\right)\right)}e^++\hat\nabla_{\extd\left(\begin{smallmatrix} a & b \\ c & d \end{smallmatrix}\right)}(H\la e^+)\end{align*}
as required since $\delta=0$. \end{Example}

\section{Formulation of Poisson principal bundles}\label{PPB}

A principal bundle in a smooth setting is a smooth manifold $X$ with a smooth, free and proper action of a Lie group $G$ and a local triviality condition so that $M=X/G$ is a smooth manifold and $X$ a fibre bundle over it with fibre $G$. We have discussed actions and now we consider the further data we need for a principal bundle at the quantum and hence semiclassical level. From a~practical perspective, the key expression of local triviality is transversality: the $C^\infty(X)$-module of horizontal forms $\Omega^1_{\rm hor}$ (the pull back of $\Omega^1(M)$ along the canonical projection $X\to M$) is precisely the joint kernel of the vector fields for the infinitesimal action. The invariant horizontal forms are then the forms on the base, $\Omega^1(M)\hookrightarrow\Omega^1_{\rm hor}(X)$. In the quantum case, the transversality is exactness of the sequence~(\ref{PHexact}), the horizontal forms are $P\Omega^1_AP$ with (under reasonable conditions) invariants under the coaction of $H$ recovering the quantum calculus $\Omega^1_A\subseteq \Omega^1_P$. Therefore the main missing ingredient we need is the semiclassical version of this transversality condition. Motivated by the assumption of $*$-product quantisations, see Remark~\ref{defbunrem}, we are led to the following.
\begin{Definition}\label{PPBdef}
A {\em Poisson principal bundle} means a classical principal bundle $X\to X/G$ with
\begin{enumerate}\itemsep=0pt
\item $X$ a Poisson manifold and $G$ a Poisson--Lie group such that the action on $X$ is a Poisson--Lie group action of $G$ in the sense of Lemma~\ref{PLGactionprop}.
\item A bicovariant Poisson-compatible connection on $G$ given by $\Xi\colon \cg^*\tens\cg^*\to\cg^*$.
\item A Poisson-compatible connection on $X$ which is left $G$-covariant in the sense of Definition~\ref{concovX}.
\item A {\em Poisson transversality condition} for all $\eta,\tau\in \Omega^1(X)$ and $\xi\in\cg$,
\begin{equation*}
i_{\tilde\xi}\big(\hat\nabla^X_{\eta}\tau\big)=i_{\widetilde{\Xi^{*1}_\xi}}(\eta)i_{\widetilde{\Xi^{*2}_\xi}}(\tau)+\pi_X(\eta,\extd i_{\tilde\xi}(\tau)).
\end{equation*}
\end{enumerate}
\end{Definition}

In principle, we could need more conditions, but we will see that the above is sufficient for our purposes.

\begin{Corollary}\label{corintsch} Let $\hat\nabla$ be a left-covariant contravariant connection $\Omega^1(X)$ obeying the transversality condition $(4)$ in Definition~{\rm \ref{PPBdef}}. If this is Poisson-compatible then
\begin{equation*}
i_{\tilde\xi}[\eta,\tau]= \pi_X(\eta,\extd i_{\tilde\xi}(\tau))-\pi_X(\extd i_{\tilde\xi}(\eta),\tau)-\tfrac{1}{2} i_{\tilde{\delta\xi}}(\eta\wedge\tau)\end{equation*}
for all $\eta,\tau\in \Omega^1(X)$.
\end{Corollary}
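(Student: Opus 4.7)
The plan is to apply $i_{\tilde\xi}$ to both sides of the Poisson-compatibility identity $[\eta,\tau]=\hat\nabla^X_\eta\tau-\hat\nabla^X_\tau\eta$ from \eqref{precon2}, and to expand each of the two resulting terms on the right using the transversality condition~(4) of Definition~\ref{PPBdef}. This will separate the outcome into $\pi_X$-type pieces (from the $\pi_X(\eta,\extd i_{\tilde\xi}\tau)$ part of transversality) and products of interior products along the $\widetilde{\Xi^*_\xi}$ vector fields (from the other part).

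Concretely, after the substitution I would obtain
\[
 i_{\tilde\xi}[\eta,\tau] = \pi_X(\eta,\extd i_{\tilde\xi}(\tau)) - \pi_X(\tau,\extd i_{\tilde\xi}(\eta)) + i_{\widetilde{\Xi^{*1}_\xi}}(\eta)\,i_{\widetilde{\Xi^{*2}_\xi}}(\tau) - i_{\widetilde{\Xi^{*1}_\xi}}(\tau)\,i_{\widetilde{\Xi^{*2}_\xi}}(\eta).
\]
The first two summands reproduce the Poisson-bivector part of the claimed identity once the antisymmetry of $\pi_X$ is used to rewrite one of them in the form $\pi_X(\extd i_{\tilde\xi}(\eta),\tau)$.

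The essential structural step is then to identify the remaining antisymmetric combination of $\Xi^*$ interior products with $-\tfrac{1}{2}i_{\tilde{\delta\xi}}(\eta\wedge\tau)$. For this I would dualise the Poisson-compatibility relation~\eqref{Xipoisson} to obtain $\delta^1_\xi\tens\delta^2_\xi = \Xi^{*1}_\xi\tens\Xi^{*2}_\xi - \Xi^{*2}_\xi\tens\Xi^{*1}_\xi$, so that the bivector field $\tilde{\delta\xi}$ admits this explicit antisymmetric decomposition in terms of $\widetilde{\Xi^*_\xi}$. Expanding $i_{\tilde{\delta\xi}}(\eta\wedge\tau)=i_{\tilde\delta^1_\xi}\,i_{\tilde\delta^2_\xi}(\eta\wedge\tau)$ via the identity $i_V(\alpha\wedge\beta)=i_V(\alpha)\beta-\alpha\,i_V(\beta)$ applied twice, and using that the interior products are scalar-valued hence commute, produces precisely $-2$ times the $\Xi^*$ combination above, the factor of $2$ arising because the two summands in the antisymmetrised $\delta$ each contribute equally. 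Substituting this closes the proof.

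The only real obstacle is careful sign bookkeeping: the antisymmetries of $\pi_X$ and of $\delta$, together with the alternating signs produced by contracting a bivector field against a $2$-form, must all be combined coherently. No additional geometric input is required beyond the two compatibility axioms already in force, and the argument closely parallels that of Corollary~\ref{corconcovX}, with Poisson-bracket data playing the role there taken by the Lie derivative.
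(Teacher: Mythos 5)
Your proposal is correct and follows essentially the same route as the paper: apply $i_{\tilde\xi}$ to the Poisson-compatibility identity, expand both terms via the transversality condition, use antisymmetry of $\pi_X$, and recognise the antisymmetrised $\Xi^*$ combination as $\delta\xi$ via the dual of~(\ref{Xipoisson}), repackaged as $-\tfrac{1}{2}i_{\tilde{\delta\xi}}(\eta\wedge\tau)$. The sign and factor bookkeeping in your final step also checks out against the paper's conventions for $i_{v\wedge w}=i_vi_w$.
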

\begin{proof} We compute using Poisson-compatibility,
\begin{align*}
i_{\tilde\xi}[\eta,\tau]& =i_{\tilde\xi}\hat{\nabla}^X_{\eta}\tau-i_{\tilde\xi}\hat{\nabla}^X_{\tau}\eta\\
& =i_{\widetilde{\Xi^{*1}_\xi}}(\eta)i_{\widetilde{\Xi^{*2}_\xi}}(\tau)+\pi_X(\eta,\extd i_{\tilde\xi}(\tau))- i_{\widetilde{\Xi^{*1}_\xi}}(\tau)i_{\widetilde{\Xi^{*2}_\xi}}(\eta)-\pi_X(\tau,\extd i_{\tilde\xi}(\eta))\\
&= \pi_X(\eta,\extd i_{\tilde\xi}(\tau))-\pi_X(\extd i_{\tilde\xi}(\eta),\tau)+i_{(\tilde{\delta\xi)^1}}(\eta)i_{(\tilde{\delta\xi)^2}}(\tau).\end{align*}
Here, \eqref{Xipoisson} in dual form allows us to recognise $\tilde{\delta\xi}$ from the cocommutator. We also use antisymmetry of $\pi_X$. We then put the result as a bivector interior product as discussed before.
\end{proof}

On exact differentials, this reduces to the covariance condition on $\xi\la\{p,q\}$ for the Poisson bracket. We also want to know that our definition is fit for purpose and implies that the base is not only a~manifold but a Poisson manifold with an induced Poisson-compatible contravariant connection.

\begin{Proposition}\label{basecalc} Let $G$ be a PLG with a free smooth proper right PLG action on a Poisson manifold $X$, equipped with Poisson-compatible contravariant connections $\hat\nabla^G$ and $\hat\nabla^X$ so as to form a Poisson-principal bundle as in Definition~{\rm \ref{PPBdef}}. Then $M=X/G$ becomes a Poisson manifold with Poisson-compatible contravariant connection $\hat\nabla^M$ the restriction of $\hat\nabla^X$.
\end{Proposition}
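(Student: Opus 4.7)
The plan is to identify $C^\infty(M)$ with the $G$-invariant functions $C^\infty(X)^G = \{f\in C^\infty(X) : \xi\la f=0\ \forall \xi\in\cg\}$ and $\Omega^1(M)$ with its image inside $\Omega^1(X)$ as horizontal invariant 1-forms, i.e.\ those $\tau\in\Omega^1(X)$ obeying $i_{\tilde\xi}\tau=0$ and $\xi\la\tau=0$ for every $\xi\in\cg$. Since $G$ acts freely and properly, these identifications hold classically, and both the Poisson bracket of $M$ and the contravariant connection $\hat\nabla^M$ will be defined by restriction of the corresponding structures on $X$. The only real content is to check that these restrictions stay inside the invariant horizontal data, and the Poisson-compatibility then follows essentially for free.

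First I would check that $M$ is Poisson. For $a,b\in C^\infty(X)^G$ the PLG action covariance \eqref{eqPLGactionprop} gives
\[ \xi\la\{a,b\}_X = \{\xi\la a, b\}_X + \{a,\xi\la b\}_X + \big(\delta_\xi^1\la a\big)\big(\delta_\xi^2\la b\big)=0\]
for all $\xi\in\cg$, since each factor involving $a$ or $b$ in a Lie-algebra action vanishes by invariance. Hence $\{a,b\}_X\in C^\infty(X)^G\cong C^\infty(M)$ and we may set $\{a,b\}_M:=\{a,b\}_X$; the Jacobi identity and the biderivation property descend from those on $X$.

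Next I would show the restriction of $\hat\nabla^X$ to a pair of invariant horizontal 1-forms $\eta,\tau$ is again invariant and horizontal. For horizontality, I apply the Poisson transversality condition of Definition~\ref{PPBdef}(4):
\[ i_{\tilde\xi}\big(\hat\nabla^X_{\eta}\tau\big)=i_{\widetilde{\Xi^{*1}_\xi}}(\eta)\, i_{\widetilde{\Xi^{*2}_\xi}}(\tau)+\pi_X(\eta,\extd i_{\tilde\xi}\tau).\]
Both summands vanish: the second because $i_{\tilde\xi}\tau=0$, and the first because $\widetilde{\Xi^{*1}_\xi}$ is a vertical vector field (it is the infinitesimal generator associated to the Lie algebra element $\Xi^{*1}_\xi\in\cg$) and $\eta$ is horizontal, so $i_{\widetilde{\Xi^{*1}_\xi}}\eta=0$. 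For invariance I use the $G$-covariance condition \eqref{eq2concovX}:
\[ \xi\la\hat\nabla^X_{\eta}\tau=\hat\nabla^X_{\xi\la\eta}\tau+\hat\nabla^X_{\eta}(\xi\la\tau)+\big(i_{\tilde{\delta_\xi^1}}\eta\big)\big(\delta_\xi^2\la\tau\big),\]
where the first two terms vanish by invariance of $\eta,\tau$ and the third vanishes because $\tilde{\delta_\xi^1}$ is vertical while $\eta$ is horizontal. Therefore $\hat\nabla^M_\eta\tau:=\hat\nabla^X_\eta\tau$ is a well-defined contravariant connection on $M$; the Leibniz rules \eqref{precon1} for $a\in C^\infty(M)$ follow from the corresponding identities on $X$ since the Poisson bracket and inclusion agree.

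Finally, Poisson-compatibility on $M$ means $[\eta,\tau]_M=\hat\nabla^M_\eta\tau-\hat\nabla^M_\tau\eta$ where the Schouten bracket on $M$ is determined by $[\extd a,\extd b]_M=\extd\{a,b\}_M$ and the Leibniz-type extension. Since $\{a,b\}_M=\{a,b\}_X$ for $a,b\in C^\infty(M)$ and $\extd$ commutes with the inclusion, the Schouten brackets agree on exact forms and hence on all of $\Omega^1(M)$ by the Leibniz rule. The identity $[\eta,\tau]_X=\hat\nabla^X_\eta\tau-\hat\nabla^X_\tau\eta$ therefore restricts to the required identity on $M$. The main obstacle is really just step~3: it is precisely the transversality axiom of Definition~\ref{PPBdef}(4), together with the $G$-covariance of $\hat\nabla^X$, that forces the restricted connection to preserve the invariant horizontal subbimodule, and this is why those axioms were built in.
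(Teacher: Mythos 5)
Your proposal is correct and follows essentially the same route as the paper: the Poisson bracket descends by the PLG-action covariance, horizontality of $\hat\nabla^X_\eta\tau$ comes from the transversality axiom (with the $\Xi^*$ term killed because the fundamental vector fields are vertical), invariance comes from the $G$-covariance condition, and one then invokes the classical identification of basic (horizontal, invariant) forms with $\Omega^1(M)$. The only cosmetic difference is that you verify the closure for general invariant horizontal $\eta$ in the subscript and spell out the Poisson-compatibility on $M$, where the paper works with differentials $\extd a$ and summarises the last step as inheriting the connection properties.
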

\begin{proof} Here $C^\infty(M)$ can be identified with smooth functions on $X$ which are killed by all vector fields for the infinitesimal action. By the invariance of the Poisson bracket, it follows that if $p$, $q$ are killed by all $\tilde\xi$ then so is $\{p,q\}$, so the Poisson bracket restricts. Now let $\hat\nabla^M$ be the restriction of $\hat\nabla^X$ to the differentials of such functions. By the covariance condition, it follows that the output of $\hat\nabla^M$ is invariant under all $\xi\la=\CL_{\tilde\xi}$. By the bundle condition, it follows that the output of $\hat\nabla^M$ is killed by $i_{\tilde\xi}$. However, if $\sum p_i\extd q_i$ with $p_i,q_i\in C^\infty(X)$ has these properties then $\sum p_i\tilde{\xi}(q_i)=0$ and moreover, by transversality of the classical bundle, we know that the form is horizontal, so we can assume $q_i\in C^\infty(M)$. Then by the first property, $\sum\tilde\xi(p_i)\extd q_i=0$ so that taking the $\extd q^i$ independent, the $p_i$ are also in $C^\infty(M)$, i.e., our form is an element of~$\Omega^1(M)$. Thus $\hat\nabla^M$ is defined, and in this case inherits the connection properties. \end{proof}

\begin{Example}\label{exspherebun} Following on from Example~\ref{exS2PB} for the Poisson version of ${\rm SU}_2\to S^2={\rm SU}_2/S^1$, one can verify that the Poisson brackets and connections there obey the transversality condition~(4) in Definition~\ref{PPBdef}. For example,
\begin{align*}
i_{\tilde\xi}\Big(\hat\nabla^X_{\extd \left(\begin{smallmatrix} a & b \\ c & d\end{smallmatrix}\right)}e^0\Big)&=i_{\tilde\xi}\left( \begin{pmatrix} -a & b \\ -c & d\end{pmatrix}e^0\right)= \begin{pmatrix} -a & b \\ -c & d\end{pmatrix}\\
&=-\tilde{\xi}\left( \begin{pmatrix} a & b \\ c & d\end{pmatrix}\right)i_{\tilde\xi}\big(e^0\big)+\left\{ \begin{pmatrix} a & b \\ c & d\end{pmatrix},i_{\tilde\xi}e^0\right\}_X\\
&=i_{\widetilde{\Xi_\xi^{*1}}}\left(\extd \begin{pmatrix} a & b \\ c & d\end{pmatrix}\right)i_{\widetilde{\Xi_\xi^{*2}}}\big(e^0\big)+\pi_X\left(\extd \begin{pmatrix} a & b \\ c & d\end{pmatrix},\extd\big(i_{\tilde\xi}e^0\big)\right),\end{align*}
where $\xi=H$ and $\tilde\xi$ is the vector field for its infinitesimal action by right-translation as in Example~\ref{exSU2}. This is dual to $e^0\in\Omega^1$, so $\tilde\xi(a)=a$ and $i_{\tilde\xi}\big(e^0\big)=\big\langle \tilde\xi,e^0\big\rangle=1$. We also used $\Xi_\xi^*=-\xi\tens\xi$ from~(\ref{conS2}).

Hence we obtain a Poisson bracket and connection on $M=S^2/S^1$ by Proposition~\ref{basecalc}, which we now describe. If we use the complexified coordinates $z=cd$, $z^*=-ab$, $x=-bc$ then the algebra relations are given by
\[ zx=(1+\lambda)xz, \qquad z^*x=(1-\lambda)xz^*, \qquad zz^*=(1+2\lambda)z^*z-\lambda x, \qquad z^*z=x(1-x).\]
From which, the Poisson bracket and Poisson-compatible contravariant connection can be computed as
\begin{gather*}
 \{ z,x \}_{S^2}=xz, \qquad \{ z^*,x \}_{S^2}=-xz^*, \qquad \{ z,z^* \}_{S^2}=2z^*zx, \\
\hat\nabla^{S^2}_{\extd z}\extd z=(2x-1)z\extd z-2z^2\extd x, \qquad \hat\nabla^{S^2}_{\extd z}\extd z^*=-(2x-1)z\extd z^*-2x^2\extd x,\\
\hat\nabla^{S^2}_{\extd z^*}\extd z=(2x-1)z^*\extd z+2x^2\extd x, \qquad \hat\nabla^{S^2}_{\extd z^*}\extd z^*=-(2x-1)z^*\extd z^*+2{z^*}^2\extd x, \\
 \hat\nabla^{S^2}_{\extd z}\extd x=-(2x-1)z\extd x+(2x-1)x\extd z, \qquad \hat\nabla^{S^2}_{\extd z^*}\extd x=(2x-1)z^*\extd x-(2x-1)x\extd z^*,\\
 \hat\nabla^{S^2}_{\extd x}\extd x=(2x-1)x\extd x+2xz\extd z^*.\end{gather*}
In the present case, we also have left-translation covariance of the Poisson bracket and a~3D calculus in Example~\ref{exSU2} on ${\rm SU}_2$ commuting with the right action of~$S^1$. This necessarily descends to an action of ${\mathfrak{su}}_2$ on the sphere as
\begin{gather*} x\ra H=0, \qquad z\ra H= -2z, \qquad z^*\ra H=2z^*,\qquad x\ra X_+=-z,\qquad z\ra X_+= 0,\\
z^*\ra X_+=2x-1,\qquad x\ra X_-=z^*, \qquad z\ra X_-=1-2x, \qquad z^*\ra X_-=0\end{gather*}
with respect to which $\{\ ,\ \}_{S^2}$ and $\hat\nabla^{S^2}$ are covariant in our Poisson sense of Remark~\ref{RemarkPLGactright}. For example,
\begin{align*} \big(\hat\nabla^{S^2}_{\extd z}\extd z^*\big)\ra X_+&=\big({-}(2x-1)z\extd z^*-2x^2\extd x\big)\ra X_+=2z^2\extd z^*+2z\extd x+2x^2\extd z\\
&=4z(1-x)\extd x+2x(2x-1)\extd z\end{align*}
since $\extd x=-{1\over{2x-1}}(z^*\extd z+z\extd z^*)$ classically. On the other hand,
\begin{gather*} \hat\nabla^{S^2}_{\extd(z\ra X_+)}\extd z^* +\hat\nabla_{\extd z}\extd(z^*\ra X_+)=2\hat\nabla^{S^2}_{\extd z}\extd x=-2(2x-1)z\extd x+2x(2x-1)\extd z, \\
 \big(z\ra\delta^1_{X_+}\big) \extd\big(z^*\ra\delta^2_{X_+}\big)=\tfrac{1}{2} (z\ra X_+)\extd(z^*\ra H)-\tfrac{1}{2} (z\ra H)\extd(z^*\ra X_+)=2z\extd x, \end{gather*}
from which we see that the covariance condition holds.
\end{Example}

It remains to explain how the proposed semiclassical transversality condition~(4) relates to the quantum transversality. This should be considered purely as motivation in that we have not constructed the assumed $*$-product quantisations.

\begin{Remark}\label{defbunrem} We suppose $*$-product quantisations $C^\infty(X)[[\lambda]]$, $C^\infty(M)[[\lambda]]$ and $C^\infty(G)[[\lambda]]$ forming a quantum principal bundle when working over $\C[[\lambda]]$, at least to $\mathcal{O}\big(\lambda^2\big)$. Here $X$, $G$ are a Poisson manifold and a PLG respectively, and $\hat\nabla^G$ is bicovariant as defined by $\Xi^*$ while $\hat\nabla^X$ is covariant for the differentials of the total space of the quantum bundle. We compute $\text{ver}(p\bullet\extd q)$ in (\ref{PHver}) working at the quantum group level. We assume that the coproduct is undeformed and write $\varpi^\bullet\pi_\eps(h)=S^\bullet h\o\bullet \extd h\t$ for $S^\bullet$ the deformed antipode. Then
\begin{gather*} \lambda\text{ver}\big(\hat{\nabla}^X_{\extd p}\extd q\big) +\mathcal{O}\big(\lambda^2\big)=\text{ver}(p\bullet\extd q-(\extd q)\bullet p)=\text{ver}(p\bullet\extd q-\extd(q\bullet p)+q\bullet \extd p)\\
\qquad{} = p* q\rz \tens \varpi^\bullet\pi_\eps\big(q\co\big)-q\rz * p\rz \tens S^\bullet p\co\o \bullet \varpi^\bullet\pi_\eps\big(q\co\big)\bullet p\co\t \\
\qquad{} = p* q\rz \tens \varpi^\bullet\pi_\eps\big(q\co\big)-q\rz * p\rz \tens S^\bullet p\co\o * p\co\t\bullet \varpi^\bullet\pi_\eps\big(q\co\big) \\
\qquad\quad{} -q\rz * p\rz \tens S^\bullet p\co\o \bullet\big[ \varpi^\bullet\pi_\eps(q\co),p\co\t\big]_\bullet\\
\qquad{} = \big[p,q\rz\big]_*\tens \varpi^\bullet\pi_\eps\big(q\co\big) -q\rz * p\rz \tens S^\bullet p\co\o \bullet\big[ \varpi^\bullet\pi_\eps\big(q\co\big),p\co\t\big]_\bullet\\
\qquad{} = \lambda\big(\big\{p,q\rz\big\}\tens \varpi\pi_\eps\big(q\co\big) +q\rz p\rz \tens S p\co\o \hat\nabla^G_{{\rm d}p\co\t} \varpi\pi_\eps\big(q\co\big)\big),
\end{gather*}
where we dropped the bullets in the last line as there is already a $\lambda$ out front and we are working to $\mathcal{O}\big(\lambda^2\big)$. Hence at order $\lambda$ we must have that
\begin{align*}
\text{ver}\hat{\nabla}^X_{\extd p}\extd q&=\big\{p,q\rz\big\}\tens \varpi\pi_\eps\big(q\co\big) +q\rz p\rz \tens \hat\nabla^G_{\varpi\pi_\eps(p\co)} \varpi\pi_\eps\big(q\co\big)\\
& =\big\lbrace p, \text{ver}(\extd q)^1\big\rbrace\tens\text{ver}(\extd q)^2+\text{ver}(\extd p)^1\text{ver}(\extd q)^1\tens \hat{\nabla^G}_{\text{ver}(\extd p)^2}\text{ver}(\extd q)^2,
\end{align*}
where we brought $Sp\co\o$ into the subscript of $\hat\nabla^G$. We then recognised $q\rz\tens\varpi\pi_\eps(q\co)=\text{ver}(\extd q)=\text{ver}(\extd q)^1\tens\text{ver}(\extd q)^2$, say.

Next, recall that $\hat\nabla^G$ is defined by $\Xi\colon \mathfrak{g}^*\tens\mathfrak{g}^*\rightarrow \mathfrak{g}^*$, with dual $\Xi^*\colon \mathfrak{g}\rightarrow\mathfrak{g}\tens\mathfrak{g}$. We also note that $\ver^2$ is a left-invariant 1-form which, under the identification of $\Omega^1(G)$ with $C^\infty(G,\cg^*)$, corresponds to a constant function. Hence, our condition on $\ver \hat\nabla^X$ is equivalent to
\begin{align*}
i_{\tilde\xi}\big(\hat{\nabla}^X_{\extd p}\extd q\big)&= (\id\tens\xi)\ver \big(\hat{\nabla}^X_{\extd p}\extd q\big)\\
&=\ver (\extd p)^1\ver (\extd q)^1\tens\big\langle\Xi^*(\xi)^1,\ver (\extd p)^2\big\rangle
\big\langle\Xi^*(\xi)^2,\ver (\extd q)^2\big\rangle+ \lbrace p,\ver _\xi(\extd q) \rbrace
\end{align*}
for all $\xi\in\cg$, where $(\id\tens\xi)\ver =i_{\tilde\xi}$. Finally, $\ver (\extd p)^1\tens\big\langle\Xi^{*1},\ver (\extd p)^2\big\rangle=\ver _{\Xi^{*1}}(\extd p)=i_{\widetilde{\Xi^{*1}}}(\extd p)=\widetilde{\Xi^{*1}}(p)$, and similarly for the other factor, gives us the condition
\begin{equation*}
i_{\tilde\xi}\big(\hat{\nabla}^X_{\extd p}\extd q\big)=\widetilde{\Xi^{*1}_\xi}(p)\widetilde{\Xi^{*2}_\xi}(q)+\big\lbrace p,{\tilde\xi}(q)\big\rbrace_X
\end{equation*}
for all $p,q\in C^\infty(X)$, $\xi\in\mathfrak{g}$. It is then straightforward to extend this to 1-forms $\tau,\eta\in\Omega^1(X)$ to find the condition (4) in Definition~\ref{PPBdef}. \end{Remark}

\section{Connections on Poisson principal bundles}\label{Assocsec}

In noncommutative differential geometry, once we have a quantum principal bundle $P$, the next order of business is to find a `spin connection' $\omega\colon \Lambda^1_H\to \Omega^1_P$ which is equivariant (where $H$ coacts on $\Lambda^1_H$ by a right adjoint coaction that follows from the assumed $\Omega^1_H$ bicovariance) and such that $\ver(\omega(v))=1\tens v$ for all $v\in\Lambda^1_H$. This provides a splitting of~$\Omega^1_P$ in the form of an associated projection $\Pi_\omega$ defined by $\Pi_\omega(\extd p)=p\rz\omega\big(\varpi\pi_\eps p\co\big)$, which is equivariant for the right coaction of~$H$, a left $P$-module map, idempotent and has $\ker\Pi_\omega=P\Omega^1_AP$, see~\cite{BrzMa} and \cite[Chapter~5]{BegMa}. We also require $\omega$ to be such that $(\id-\Pi_\omega)\extd P\subseteq \Omega^1_AP$, in which case it defines a~connection on $P$ itself by
\[ \nabla_P\colon \ P\to \Omega^1_AP=\Omega^1_A\tens_A P,\qquad \nabla_P p= \extd p- p\rz\omega(\varpi\pi_\eps p\co).\]
The first equality is canonically afforded by viewing $\Omega^1_A\subseteq\Omega^1_P$ and multiplying there.
It is known\cite[Proposition~5.50]{BegMa} that this is a bimodule connection if and only if $\sum a_i\omega\big(\Lambda^1_H\big)b_i=0$ for all $\sum a_i \tens b_i\in A\tens A$ such that $\sum a_ib_i=0$ and $\sum (\extd a_i)b_i=0$. In this case, the generalised braiding is
\[ \sigma_P(p\tens (\extd a) b)= p(\extd a)b-p\rz \big[a,\omega\big(\varpi\pi_\eps p\co\big)\big]b.\]
Moreover, $\nabla_P$ commutes with the right coaction of $H$ under our assumptions. As a~consequence, for any $H$-comodule $V$, we can define $E=(P\tens V)^H$ as a noncommutative analogue of the space of sections of the associated bundle (the superscript~$H$ indicates the invariant subspace under the tensor product coaction) and a connection~\cite{BegMa, BrzMa}
\[ \nabla_E\colon \ (P\tens V)^H=E\to \big(\Omega^1_A P\tens V\big)^H=\Omega^1_A\tens_A E,\qquad \nabla_E=\nabla_P\tens \id.\]

We see that the key here for the theory of associated bundles and `spin connections' is the construction of an equivariant $\nabla_P$. Its properties of being a bimodule connection when viewing $A\subseteq P$ and working inside~$\Omega^1_P$ come down to
\begin{gather} \nabla_P\colon \ P\to \Omega^1_AP,\qquad \sigma_P\colon \ P\Omega^1_A\to \Omega^1_AP,\nonumber\\
 \label{nablaPleib1} \nabla_P(a.p) =(\extd a).p+ a.\nabla_P(p),\qquad \nabla_P(p.a)=(\nabla_P p).a+ \sigma_P(p.\extd a),\\ \label{nablaPleib2} a.\sigma_P(p.\extd b) =\sigma_P(a.p.\extd b),\qquad (\sigma_P(p.\extd b)).a=\sigma_P(p.(\extd b).a)\end{gather}
 for all $a,b\in A$, $p\in P$. We constructed $\nabla_P$ from $\omega$ above with conditions to ensure that~$\sigma_P$ is well-defined by (\ref{nablaPleib1}), after which it automatically obeys the $A$-bimodule map conditions~(\ref{nablaPleib2}). We emphasise the product of $P$ and $\Omega^1_P$ for clarity. There are exactly similar inherited formulae for~$\nabla_E$, now with $\sigma_E\colon \big(P\Omega^1_A\tens V\big)^H\to \big(\Omega^1_AP\tens V\big)^H$. The point is that by working `upstairs' in~$\Omega^1_P$, we avoid mention of~$\tens_A$.

What this comes down to for a classical principal bundle $P=C^\infty(X)$, Lie structure group $G$ and Lie algebra $\cg$ with basis $\{e_i\}$, say, (with $\Lambda^1_H$ replaced now by $\cg^*$) is that a spin connection is an equivariant Lie-algebra valued 1-form on $X$,
 \begin{equation}\label{classomega} \omega=\omega^i\tens e_i,\qquad \xi\la \omega^i+\omega^i\tens [\xi,e_i]=0,\qquad i_{\tilde\xi}(\omega^i) e_i=\xi\end{equation}
 for all $\xi\in\cg$ and sum over $i$ understood. Then
\begin{equation}\label{classnabla}\nabla_P\colon \ P\to \Omega^1_{\rm hor},\qquad \nabla_Pp= \extd p- \omega^i\tilde{e_i}(p),\qquad \sigma_P(p\extd a)=(\extd a)p\end{equation}
obeys the above Leibniz rules with everything commutative (so $\sigma_P=\id$ on $\Omega^1_{\rm hor}$). From this, restricting to $E=C^\infty_G(X,V)$ (the sections of the vector bundle associated to a~representation~$V$ of~$G$) gives the induced connection $\nabla_E$ on the associated bundle. Interior product with a vector field gives the covariant derivative along that vector field in the usual sense.

We now suppose that we have such a classical principal bundle with spin connection $\omega$ and seek to deformation-quantise everything. In fact, we are only going to look at this up to and including first order in $\lambda$, what could be called the `semiclassical' level. Formally, this can be done by working over the ring $\C[\lambda]/\big(\lambda^2\big)$ in place of $\C[[\lambda]]$ as explained in \cite{BegMa:prg}, but for continuity with the general scheme, we will use the deformation point of view but ignore $\mathcal{O}\big(\lambda^2\big)$ as errors to be corrected in a 2nd order theory. We can assume that $C^\infty(X)[[\lambda]]$ has an associative $*$-product but we only assume this $\mathcal{O}\big(\lambda^2\big)$ for the bimodule product $\bullet$ of $\Omega^1(X)[[\lambda]]$, since we allow that $\hat\nabla^X$ could have curvature. We do, however, fix the antisymmetric form of the leading products as per (\ref{proddef}) applied now on $X$ as a Poisson manifold. This means that our theory has a~slightly different flavour from previous sections, being specific to the leading part of a preferred deformation scheme.

\begin{Lemma}\label{nablabullet} Let $X$, $G$ be a Poisson principal bundle as in Definition~{\rm \ref{PPBdef}} and suppose we are given a connection in the sense
\[ \nabla_P\colon \ C^\infty(X)\to \Omega^1_{\rm hor},\qquad \nabla_P(ap)=(\extd a)p +a\nabla_Pp\]
for all $p\in C^\infty(X), a\in C^\infty(M)$. Consider $\Omega^1(X)[[\lambda]]$ quantised as in \eqref{proddef} and $\nabla^\bullet_Pp$ in here of the form
\[ \nabla^\bullet_{P}p= \nabla_Pp + \frac{\lambda}{2} \Gamma(p)+ \mathcal{O}\big(\lambda^2\big),\qquad \Gamma\colon \ C^\infty(X)\to \Omega^1_{\rm hor}.\]
Then the first of \eqref{nablaPleib1} holds with $*$ and $\bullet$ products to $\mathcal{O}\big(\lambda^2\big)$ if and only if
\begin{equation}\label{Gamma} \Gamma(ap)=a\Gamma(p)+\hat\nabla^X_{\extd a}\nabla_Pp-\hat\nabla^X_{\extd p}\extd a-\nabla_P\{a,p\}_X,\end{equation}
while the second of \eqref{nablaPleib1} holding requires
\begin{gather*} \sigma^\bullet_P(p\extd a) =p\extd a+\lambda\varsigma(p\extd a)+\mathcal{O}\big(\lambda^2\big),\qquad \varsigma\colon \ \Omega^1_{\rm hor}\to \Omega^1_{\rm hor},\nonumber\\
\varsigma(p\extd a) =\hat\nabla^X_{\extd a}\nabla_Pp-\hat\nabla^X_{\extd p}\extd a-\nabla_P\{a,p\}_X. \end{gather*}
If this is well-defined then \eqref{nablaPleib2} hold with $*$ and $\bullet$ products to $\mathcal{O}\big(\lambda^2\big)$. \end{Lemma}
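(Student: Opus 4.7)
The plan is to compute both sides of each Leibniz rule order-by-order in~$\lambda$ using the expansions~\eqref{proddef}, the ansatz $\nabla^\bullet_P p=\nabla_P p+\tfrac{\lambda}{2}\Gamma(p)+\mathcal{O}\big(\lambda^2\big)$ and, for the second Leibniz rule, $\sigma^\bullet_P=\id+\lambda\varsigma+\mathcal{O}\big(\lambda^2\big)$ (the identity being the classical value of $\sigma_P$, which is trivial on horizontal forms since everything commutes classically). For the first Leibniz rule, I would substitute $a * p=ap+\tfrac{\lambda}{2}\{a,p\}_X$, $(\extd a)\bullet p=p\extd a-\tfrac{\lambda}{2}\hat\nabla^X_{\extd p}\extd a$ and $a\bullet\eta=a\eta+\tfrac{\lambda}{2}\hat\nabla^X_{\extd a}\eta$ into $\nabla^\bullet_P(a * p)=(\extd a)\bullet p+a\bullet\nabla^\bullet_P p$. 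The $\lambda^0$ match is exactly the classical Leibniz rule for $\nabla_P$, which is given, and matching at $\lambda^1$ rearranges immediately to~\eqref{Gamma}. As a side check on the stated codomain, applying $i_{\tilde\xi}$ to the right-hand side of~\eqref{Gamma} and invoking the Poisson transversality condition~(4) of Definition~\ref{PPBdef}, the horizontality of $\nabla_P p$, and $\tilde\xi(a)=\widetilde{\Xi^{*2}_\xi}(a)=0$ (because $a\in C^\infty(M)$ is $G$-invariant) confirms $\Gamma(ap)\in\Omega^1_{\rm hor}$ as required.

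For the second claim, I would expand $\nabla^\bullet_P(p * a)=\nabla^\bullet_P p\bullet a+\sigma^\bullet_P(p\bullet\extd a)$ in the same way, using $p * a=ap-\tfrac{\lambda}{2}\{a,p\}_X$, $\eta\bullet a=a\eta-\tfrac{\lambda}{2}\hat\nabla^X_{\extd a}\eta$ and $p\bullet\extd a=p\extd a+\tfrac{\lambda}{2}\hat\nabla^X_{\extd p}\extd a$. Once more the $\lambda^0$ part is the classical Leibniz rule. At $\lambda^1$, substituting the formula for $\Gamma(ap)$ obtained in the first step lets me solve for $\varsigma(p\extd a)$, and after cancellation the answer simplifies to the claimed expression.

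For the bimodule map axioms~\eqref{nablaPleib2}, I would expand each side of $a\bullet\sigma^\bullet_P(p\bullet\extd b)=\sigma^\bullet_P((a\bullet p)\bullet\extd b)$ to $\mathcal{O}\big(\lambda^2\big)$ (using associativity of $*$ and of $\bullet$ to this order to parse the right-hand side unambiguously), together with the contravariant-connection Leibniz rules~\eqref{precon1}: writing $\extd(ap)=a\extd p+p\extd a$ gives $\hat\nabla^X_{\extd(ap)}\extd b=a\hat\nabla^X_{\extd p}\extd b+p\hat\nabla^X_{\extd a}\extd b$ from the first Leibniz rule in~\eqref{precon1}, while the second yields $\hat\nabla^X_{\extd a}(p\extd b)=\{a,p\}_X\extd b+p\hat\nabla^X_{\extd a}\extd b$. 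All the $\hat\nabla^X$- and $\{\,,\,\}_X$-terms then cancel between the two sides, leaving the single requirement $a\varsigma(p\extd b)=\varsigma(ap\extd b)$; this is the left $A$-linearity of $\varsigma$ on $\Omega^1_{\rm hor}$, which holds by the assumed well-definedness of $\varsigma$ as a map on horizontal forms. The second identity in~\eqref{nablaPleib2} is handled analogously and reduces to the same kind of $A$-linearity condition. The main obstacle is the bookkeeping in this last step: the non-commutativity of the $\bullet$-products produces several $\{\,,\,\}_X$- and $\hat\nabla^X$-type terms on each side, and one has to apply~\eqref{precon1} in just the right places, in particular recognising the Poisson bracket $\{a,p\}_X$ generated by $\hat\nabla^X_{\extd a}(p\extd b)$, for the cancellations to go through cleanly and isolate the $\varsigma$-linearity as the one remaining condition.
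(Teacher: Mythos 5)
Your treatment of the two Leibniz rules is correct and is essentially the paper's own argument: expand $\nabla^\bullet_P(a*p)=(\extd a)\bullet p+a\bullet\nabla^\bullet_Pp$ and $\nabla^\bullet_P(p*a)=\nabla^\bullet_Pp\bullet a+\sigma^\bullet_P(p\bullet\extd a)$ to order $\lambda$, read off~\eqref{Gamma}, and observe that $\Gamma(ap)-a\Gamma(p)$ reproduces exactly the claimed $\varsigma(p\extd a)$. Your horizontality check via the transversality condition is also sound (and is how the paper argues that $\hat\nabla^X_\tau$ preserves $\Omega^1_{\rm hor}$).

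The gap is in the last step. You correctly reduce $a\bullet\sigma^\bullet_P(p\bullet\extd b)=\sigma^\bullet_P((a*p)\bullet\extd b)$ at order $\lambda$ to the single condition $a\,\varsigma(p\extd b)=\varsigma\big((ap)\extd b\big)$, but you then assert that this ``holds by the assumed well-definedness of $\varsigma$''. It does not: well-definedness only says that the value of $\varsigma$ depends on the $1$-form $ap\,\extd b$ and not on its presentation; it says nothing about whether evaluating the defining formula at $(ap,\extd b)$ equals $a$ times its value at $(p,\extd b)$. That is a genuine module-map identity which must be checked from the formula. Doing so, one finds
\begin{equation*}
\varsigma\big((ap)\extd b\big)-a\,\varsigma(p\extd b)=p\big(\hat\nabla^X_{\extd b}\extd a-\hat\nabla^X_{\extd a}\extd b-\extd\{b,a\}_X\big),
\end{equation*}
using $\nabla_P(ap)=(\extd a)p+a\nabla_Pp$, the contravariant-connection rules~\eqref{precon1} and $\{b,ap\}_X=a\{b,p\}_X+p\{b,a\}_X$. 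So the residual condition is precisely the Poisson-compatibility~\eqref{precon2} of $\hat\nabla^X$ on exact forms, $\hat\nabla^X_{\extd a}\extd b-\hat\nabla^X_{\extd b}\extd a=\extd\{a,b\}_X$ --- which is exactly what the paper's proof says the bimodule conditions ``come down to''. The conclusion is therefore true, but you need to invoke Poisson-compatibility here rather than well-definedness; as stated, your argument for~\eqref{nablaPleib2} is a non sequitur. The same remark applies to the right-module identity, which you defer to an analogous computation.
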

\begin{proof} This is simply a matter of putting the form of the $*,\bullet$ products and $\nabla^\bullet_{P}$ into (\ref{nablaPleib1})--(\ref{nablaPleib2}) and discarding anything $\mathcal{O}\big(\lambda^2\big)$. For example,
\begin{gather*} \nabla^\bullet_{P}(a*p) =\nabla_P(ap)+{\lambda\over 2}\nabla_P\{a,p\}_X+{\lambda\over 2}\Gamma(ap)+ \mathcal{O}\big(\lambda^2\big),\\
 (\extd a)\bullet p+a\bullet\nabla^\bullet_{P}p =(\extd a)p-{\lambda\over 2}\hat\nabla^X_{\extd p}\extd a+a\nabla_P p+{\lambda\over 2}\hat\nabla^X_{\extd a}\nabla_Pp+a{\lambda\over 2}\Gamma(p)+ \mathcal{O}\big(\lambda^2\big)\end{gather*}
 gives the first result. For $\sigma_P^\bullet$ we first establish that $\sigma_P^\bullet(p\extd a)=p\extd a+O(\lambda)$ by computing the right hand side at classical order, then $\sigma(p\bullet\extd a)=\sigma_P^\bullet\big(p\extd a+{\lambda\over 2}\hat\nabla^X_{\extd p}\extd a\big)=\sigma_P^\bullet(p\extd a)+{\lambda\over 2}\hat\nabla^X_{\extd p}\extd a$ to $\mathcal{O}\big(\lambda^2\big)$ and we equate this to $\nabla^\bullet_{P}(p*a)-(\nabla^\bullet_{P})\bullet a$, from which $\Gamma$ conveniently cancels given our result for $\Gamma(ap)$. That (\ref{nablaPleib2}) holds to $\mathcal{O}\big(\lambda^2\big)$ is a tedious calculation. E.g., $a\bullet\sigma(p\extd b)=\sigma(a\bullet(p\extd b))$, comes down to $\hat\nabla^X_{\extd a}\extd b-\hat\nabla^X_{\extd b}\extd a=\extd\{a,b\}_X$ on expanding both sides and using the stated formula for $\sigma_P^\bullet$. Details are omitted. Finally, note that $\eta\in \Omega^1_{\rm hor}$ is characterised by $i_{\tilde\xi}\eta=0$ for all $\xi\in \cg$ and the transversality condition in Definition~\ref{PPBdef} then ensures that $\hat\nabla^X_\tau$ preserves $\Omega^1_{\rm hor}$ for all $\tau\in \Omega^1(X)$ (because $i_{\tilde\xi}\hat\nabla^X_\tau\eta=0$ for all $\eta\in \Omega^1_{\rm hor}$). It is then clear that the image of the stated formula for $\varsigma$ is in $\Omega^1_{\rm hor}$. \end{proof}

This lemma clarifies what we mean by quantizing a spin connection in the~$\nabla_P$ form to semiclassical order as a bimodule connection at this level. The order $\lambda$ data for the associated bundle connection is just $\Gamma$ restricted to $f\in E=C^\infty_G(V)$ with $\nabla_E^\bullet f= \nabla_E f+\frac{\lambda}{2} \Gamma(f)+\mathcal{O}\big(\lambda^2\big)$.

Our approach to construct $\Gamma$ and $\varsigma$ is to start with a deformed `spin connection' which, at order $\lambda$, can have an additional component
 \begin{equation}\label{classalpha} \omega_\bullet=\omega+ {\lambda\over 2}\alpha^i\tens e_i+ \mathcal{O}\big(\lambda^2\big),\qquad \xi\la\alpha^i\tens e_i+ \alpha^i\tens [\xi,e_i]=0,\end{equation}
where $\alpha^i\in \Omega^1_{\rm hor}$, and follow the lines of a deformed version of (\ref{classnabla}). We can take $\alpha=0$ as a~canonical choice here.

\begin{Theorem}\label{semiclassbundleconn} Let $X$, $G$ be a Poisson principal bundle as in Definition~{\rm \ref{PPBdef}}, $\omega$ a classical `spin connection' on~$X$ with corresponding $\nabla_P$ in \eqref{classnabla}, and $\alpha$ a horizontal equivariant $\cg$-valued $1$-form on $X$ as in \eqref{classalpha}. Then
\begin{equation}\label{Gammaomega}\Gamma(p)=\widetilde{\Xi_{e_i}^{*1}}\big(\widetilde{\Xi_{e_i}^{*2}}(p)\big)\omega^i- \hat\nabla^X_{\extd \tilde{e_i}(p)}\omega^i-\tilde{e_i}(p)\alpha^i,\qquad \varsigma(\tau)=- \hat\nabla^X_{e_i\la\tau}\omega^i\end{equation}
for all $p\in C^\infty(X)$ and $\tau\in \Omega^1_{\rm hor}$ constructs $\nabla^\bullet_{P} =\nabla_P +{\lambda\over 2}\Gamma+ \mathcal{O}\big(\lambda^2\big)$ and $\sigma^\bullet_P=\id+\lambda\varsigma+\mathcal{O}\big(\lambda^2\big)$ as a~bimodule connection at semiclassical order by Lemma~{\rm \ref{nablabullet}}. Moreover, $\Gamma$ and hence $\nabla^\bullet_P$ to~$\mathcal{O}\big(\lambda^2\big)$ are equivariant.
\end{Theorem}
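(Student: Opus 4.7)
The plan is to verify the hypotheses of Lemma~\ref{nablabullet} for the stated $\Gamma$ and $\varsigma$, then verify equivariance of $\Gamma$ separately. Concretely there are four checks: (a) $\Gamma(p)$ and $\varsigma(\tau)$ take values in $\Omega^1_{\rm hor}$; (b) the Leibniz-type identity~\eqref{Gamma}; (c) matching of $\varsigma$ with the expression implied by Lemma~\ref{nablabullet}; (d) equivariance of $\Gamma$.

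For (a), I apply $i_{\tilde\xi}$ to each of the three terms of $\Gamma(p)$ and use the Poisson transversality condition of Definition~\ref{PPBdef}. The key facts are that $i_{\tilde\xi}\omega^i = \xi^i$ is a constant (so $\extd i_{\tilde\xi}\omega^i = 0$ and $i_{\widetilde{\Xi_\xi^{*2}}}\omega^i = (\Xi_\xi^{*2})^i$), and that by linearity of $\Xi^*$ in its subscript argument one has $\sum_i \xi^i \widetilde{\Xi_{e_i}^{*1}}\big(\widetilde{\Xi_{e_i}^{*2}}(p)\big) = \widetilde{\Xi_\xi^{*1}}\big(\widetilde{\Xi_\xi^{*2}}(p)\big)$. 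Transversality applied to the middle term of $\Gamma(p)$ then produces exactly this expression, cancelling the first term, while the $\alpha^i$ piece is already horizontal. A parallel calculation shows $\varsigma(\tau) \in \Omega^1_{\rm hor}$ once any horizontal $\tau$ is written as $p\,\extd a$ with $a\in C^\infty(M)$, using $\widetilde{\Xi_\xi^{*1}}(a)=0$. Step (c) is then immediate: $\CL_{\tilde e_i}(p\,\extd a) = \tilde e_i(p)\,\extd a$ since $\tilde e_i(a)=0$, so the formula for $\varsigma$ becomes $-\tilde e_i(p)\hat\nabla^X_{\extd a}\omega^i$, which matches the expression to be deduced in (b).

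For step (b), I substitute $ap$ with $a\in C^\infty(M)$ into the definition of $\Gamma$; since $\tilde e_i(a)=0$ and $\widetilde{\Xi^{*k}_{e_i}}(a)=0$, all vector-field applications fall onto $p$, and the Leibniz rule for the middle term contributes one extra piece $-\tilde e_i(p)\hat\nabla^X_{\extd a}\omega^i$. Expanding the right-hand side of~\eqref{Gamma} via $\nabla_P p = \extd p - \omega^i\tilde e_i(p)$ and the Poisson-compatibility $\hat\nabla^X_{\extd a}\extd p - \hat\nabla^X_{\extd p}\extd a = \extd\{a,p\}_X$ produces the same piece together with a residue $\omega^i\tilde e_i(\{a,p\}_X) - \{a,\tilde e_i(p)\}_X\omega^i$, which vanishes by applying~\eqref{eqPLGactionprop} to the invariant function $a$ (so that the $\xi\la a$ and $\delta^1_\xi\la a$ terms drop). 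This closes both (b) and (c).

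Step (d) is the main obstacle. Writing $\Gamma = \Gamma_1 + \Gamma_2 + \Gamma_3$ termwise, the $\Gamma_3$ contribution is immediately equivariant because $\alpha$ is. For $\Gamma_1$ and $\Gamma_2$ I use the Lie algebra homomorphism $[\tilde\xi,\tilde e_i] = \widetilde{[\xi,e_i]}$ (for the right group action) together with the $\cg$-equivariance of $\omega$ from~\eqref{classomega}; after collecting terms, the structure-constant contributions from $[\tilde\xi,\tilde e_i]$ and from $\xi\la\omega^i$ cancel between the two pieces upon relabeling. What remains are two $\delta$-residues. From $\Gamma_1$, the bicovariance formula of Theorem~\ref{bicovXi} in the form $[\xi,\Xi^{*1}_{e_i}]\tens\Xi^{*2}_{e_i} + \Xi^{*1}_{e_i}\tens[\xi,\Xi^{*2}_{e_i}] = \Xi^*_{[\xi,e_i]} + \delta^1_\xi\tens[e_i,\delta^2_\xi]$ leaves $\widetilde{\delta^1_\xi}\big(\widetilde{[e_i,\delta^2_\xi]}(p)\big)\omega^i$. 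From $\Gamma_2$, the $G$-covariance of $\hat\nabla^X$ in Definition~\ref{concovX} contributes $-\widetilde{\delta^1_\xi}(\tilde e_i(p))(\delta^2_\xi\la\omega^i)$, which upon re-expressing $\delta^2_\xi\la\omega^i$ via the equivariance of $\omega$ becomes $\widetilde{\delta^1_\xi}\big(\widetilde{[\delta^2_\xi,e_i]}(p)\big)\omega^i$. These two residues cancel because $[e_i,\delta^2_\xi] = -[\delta^2_\xi,e_i]$. This final cancellation, in which the bicovariance of $\hat\nabla^G$ plays precisely against the $G$-covariance of $\hat\nabla^X$, is the crux of the theorem and explains why all the structural data of a Poisson principal bundle are needed together.
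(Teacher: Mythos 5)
Your proposal is correct and follows essentially the same route as the paper's proof: horizontality of $\Gamma$ via the transversality condition with $i_{\tilde\xi}\omega^i$ constant, the Leibniz identity~\eqref{Gamma} via Poisson-compatibility of $\hat\nabla^X$ and the PLG-action covariance~\eqref{eqPLGactionprop} with $a$ invariant, the resulting identification of $\varsigma$, and equivariance by combining equivariance of $\omega$ and $\alpha$, the $G$-covariance of $\hat\nabla^X$ from Definition~\ref{concovX}, and the bicovariance identity of Theorem~\ref{bicovXi}, with the two $\delta$-residues cancelling via $[e_i,\delta^2_\xi]=-[\delta^2_\xi,e_i]$ exactly as in the paper's final chain of equalities. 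Your termwise bookkeeping of the equivariance computation is just a more explicit presentation of the same calculation.
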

\begin{proof} We look at $\Gamma$ first. Using the bundle transversality condition, we have
\[ i_{\tilde\xi}\nabla^X_{\extd \tilde{e_i}(p)}\omega^i=\widetilde{\Xi_\xi^{*1}}(\tilde{e_i}(p)) i_{\widetilde{\Xi_\xi^{*2}}}\big(\omega^i\big)+\big\{\tilde{e_i}(p),i_{\tilde\xi}\big(\omega^i\big)\big\}_X =\widetilde{\Xi_\xi^{*1}}\big({\widetilde{\Xi_\xi^{*2}}}(p)\big)=i_{\tilde\xi}\big(\widetilde{\Xi_{e_i}^{*1}} \big({\widetilde{\Xi_{e_i}^{*2}}}(p)\big)\omega^i\big),\]
where $i_{\tilde\xi}\big(\omega^i\big)=\xi^i$ for a spin connection as in~(\ref{classomega}), these being constants as $\xi=\xi^ie_i\in\cg$. Hence the first two terms of~$\Gamma$ are horizontal. The last term of~$\Gamma$ is horizontal as the $\alpha^i$ are assumed so. In fact, to be horizontal, we just need $\tilde{e_i}(p)i_{\tilde\xi}\big(\alpha^i\big)=0$ for all~$p$. But since the classical action is free, $\ver$ is surjective and it follows that we need $\alpha^i\in \Omega^1_{\rm hor}$ if we want the image of~$\Gamma$ to be horizontal.

Next, for $a\in C^\infty(M)$, we have $\tilde\xi(ap)=a\tilde\xi(p)$ for all $\xi\in \cg$ from which it is clear that
\[ \Gamma(ap)-a\Gamma(p)=-\hat\nabla^X_{\extd(a\tilde{e_i}(p))}\omega^i+a\hat\nabla^X_{\extd\tilde{e_i}(p)}\omega^i=-\tilde{e_i}(p)\hat\nabla^X_{\extd a}\omega^i,\]
while
\begin{gather*}\hat\nabla^X_{\extd a}\nabla_Pp-\hat\nabla^X_{\extd p}\extd a -\nabla_P\{a,p\}_X\\
\qquad{} =\hat\nabla^X_{\extd a}\extd p-\hat\nabla^X_{\extd a}(\tilde{e_i}(p)\omega^i)-\hat\nabla^X_{\extd p}\extd a-\extd\{a,p\}_X+e_i\la\{a,p\}_X\omega^i\\
\qquad{} =-\tilde{e_i}(p)\hat\nabla^X_{\extd a}\omega^i-\{a,\tilde{e_i}(p)\}\omega^i +e_i\la\{a,p\}_X\omega^i=-\tilde{e_i}(p)\hat\nabla^X_{\extd a}\omega^i\end{gather*}
also, by Lemma~\ref{eqPLGprop} and the Poisson-compatibility of $\hat\nabla^X$. This also gives us \[ \varsigma(p\extd a) =-\tilde{e_i}(p)\hat\nabla^X_{\extd a}\omega^i =-\hat\nabla^X_{\tilde{e_i}(p)\extd a}\omega^i =-\hat\nabla^X_{e_i\la(p\extd a)}\omega^i\]
since $e_i\la$ is the Lie derivative along $\tilde{e_i}$ and acts trivially on 1-forms on the base. Hence, this is well-defined on $\Omega^1_{\rm hor}$ and as stated. Since it is also $\Gamma(ap)-a\Gamma(p)$, it follows that it lies in $\Omega^1_{\rm hor}$ also, as one can also directly check using the bundle transversality condition.

So far, we have not used the assumed equivariance properties of $\omega$ and $\alpha$. We now assume these. Then
\begin{align*}\xi\la\Gamma(p)&=\big(\tilde\xi\widetilde{\Xi^{*1}_{e_i}}\widetilde{\Xi^{*2}_{e_i}}(p)\big)\omega^i + \big(\widetilde{\Xi^{*1}_{e_i}}\widetilde{\Xi^{*2}_{e_i}}(p)\big)(\xi\la\omega^i) -(\xi e_i\la p)\alpha^i- (e_i\la p)\big(\xi\la\alpha^i\big)- \xi\la \hat\nabla^X_{\extd e_i\la p}\omega^i\\
&=\big(\tilde\xi\widetilde{\Xi^{*1}_{e_i}}\widetilde{\Xi^{*2}_{e_i}}(p)\big)\omega^i
 - \big(\widetilde{\Xi^{*1}_{[\xi,e_i]}}\widetilde{\Xi^{*2}_{[\xi,e_i]}}(p)\big)\omega^i-(e_i\xi\la p)\alpha^i\\
 &\quad{} -\hat\nabla^X_{\extd \xi e_i\la p}\omega^i -\hat\nabla^X_{\extd e_i\la p}\big(\xi\la \omega^i\big)-\big( \delta^1_\xi e_i\la p\big)\big(\delta^2_\xi\la \omega^i\big)\\
 &= \big(\xi\Xi^{*1}_{e_i}\Xi^{*2}_{e_i}\la p\big)\omega^i
 - \big(\Xi^{*1}_{[\xi,e_i]}\Xi^{*2}_{[\xi,e_i]}\la p\big)\omega^i-(e_i\xi\la p)\alpha^i-\hat\nabla^X_{\extd e_i\xi\la p}\omega^i+\big( \delta^1_\xi [\delta^2_\xi,e_i]\la p\big)\omega^i \\
 &= \big(\Xi^{*1}_{e_i}\Xi^{*2}_{e_i}\xi\la p\big)\omega^i-(e_i\xi\la p)\alpha^i-\hat\nabla^X_{\extd e_i\xi\la p}\omega^i=\Gamma(\xi\la p)
\end{align*}
as required. We used equivariance of $\alpha$ to transfer $\xi\la\alpha^i$ to $[\xi,e_i]$, then covariance of $\hat\nabla^X$ in our Poisson sense of Definition~\ref{concovX} to expand $\xi\la\hat\nabla^X$ and equivariance of the classical $\omega^i$ as in (\ref{classomega}) to transfer any actions $\xi\la\omega^i$ and $\delta^2_\xi\la\omega^i$ to the relevant~$e_i$. We finally used the bicovariance condition in terms of~$\Xi$ in Theorem~\ref{bicovXi} to recognise the answer. We already have $\nabla_P$ equivariant, so $\nabla_P^\bullet$ would be quantum group covariant to $\mathcal{O}\big(\lambda^2\big)$ similarly to Remark~\ref{remomegaXcov}.
 \end{proof}

 \begin{Example} Our theme of the $q$-Hopf fibration gives a modest example. Indeed, the $q$-monopole connection is known at the quantum group level \cite{BegMa, BrzMa} and in the classical limit constructs the classical monopole by the choice $\omega \big(t^{-1}\extd t \big)=e^0$, or in our terms $\omega=e^0\tens H$. We have seen that $\Xi_H^*=-H\tens H$ in Example~\ref{exS2PB}, so
 \[ \Gamma(p)=\tilde H(\tilde H(p)) e^0-\hat\nabla^X_{\extd\tilde H(p)}e^0-\tilde H(p)\alpha=-|p|p\alpha\]
 for any $\alpha\in \Omega^1_{\rm hor}$ and $p$ of homogenous degree. Here the first two terms cancel using our results in Example~\ref{exSU2}. Representations of $S^1$ are labelled by $n\in \Z$ and the associated `charge~$n$ monopole' bundle has sections $E_n=C^\infty_{S^1}({\rm SU}_2)$ with dense subspace given by restricting to elements $p$ of homogeneous degree~$-n$. We see that the correction to the classical monopole $\nabla_{E_n}$ at first order is just to add~$n p\alpha$.
 \end{Example}

Finally, we outline how the formula (\ref{Gamma}) could be obtained from a deformation-quantisation. This should be seen purely as motivation.

\begin{Remark}\label{wedgeHequalsg} We return to the setting of Remark~\ref{defbunrem}, where we suppose for the sake of discussion that $*$-product quantisations $C^\infty(X)[[\lambda]]$, $C^\infty(M)[[\lambda]]$ and $C^\infty(G)[[\lambda]]$ form a quantum principal bundle when working over $\C[[\lambda]]$, at least to $\mathcal{O}\big(\lambda^2\big)$. We assume as there that the coproduct and $\extd$ are undeformed and note that the deformed antipode is $S^\bullet h=S h -{\lambda\over 2}\{S h\o,h\t\}S h\th+\mathcal{O}\big(\lambda^2\big)$. Then
\begin{align*} \varpi^\bullet (v)&:=S^\bullet v\o\bullet \extd v\t= S v\o\bullet \extd v\t- {\lambda\over 2}\{Sv\o,v\t\}(Sv\th)\extd v\fo\\
&=\varpi(v)+{\lambda\over 2}\big(\hat\nabla^G_{\extd Sv\o}\extd v\t-\{Sv\o,v\t\}\varpi\pi_\eps v\th\big)\\
&=\varpi(v)+ {\lambda\over 2}v\t \hat\nabla^G_{\extd Sv\o}\varpi\pi_\eps v\th=\varpi(v)-{\lambda\over 2}\hat\nabla^G_{\varpi\pi_\eps v\o}\varpi\pi_\eps v\t\end{align*}
to errors $\mathcal{O}\big(\lambda^2\big)$, where $v$ is a function on $G$ vanishing at $e$, viewed in $H^+$. For the third equality, we used $\extd v\t= v\t \varpi\pi_\eps v\th$, that $\hat\nabla^G$ is a contravariant connection, the Leibniz rule and $S^2=\id$ on the classical group to identify its subscript. We now view this as a map
\begin{equation}\label{mcbullet} \varpi^\bullet \colon \ H^+\to \cg^*[[\lambda]],\qquad \<\xi,\varpi^\bullet (v)\>=\tilde\xi(v)(e) - {\lambda\over 2}\widetilde{\Xi_\xi^{*1}}(\widetilde{\Xi_\xi^{*2}}(v))(e)+\mathcal{O}\big(\lambda^2\big),\end{equation}
 where $e$ is the group identity, on noting that
 \begin{equation*}\label{Xi2} \<\xi,\varpi\pi_\eps(v\o)\>\<\eta,\varpi\pi_\eps(v\t)\>={\extd\over\extd t}\Big|_0{\extd \over\extd s}\Big|_0v\big(e^{t\xi}e^{s\eta}\big)=\tilde\xi(\tilde\eta(v))(e)\end{equation*}
and $\<\xi,\varpi(v)\>={\extd \over\extd t}\big|_0v\big(e^{t\xi}\big)=\tilde\xi(v)(e)$ for any functions $v$ and $\xi,\eta\in\cg$ (we apply the first observation to the output of $\Xi^*\in \cg\tens\cg$). In the quantum differential calculus, we identify left-invariant 1-forms $\Lambda^1_H$ with $H^+$ modulo the kernel of $\varpi^\bullet $ and on the quantum bundle we suppose a spin connection $\omega_\bullet\colon \Lambda^1_H\to \Omega^1(X)[[\lambda]]$ which we take as given by~(\ref{mcbullet}) followed by~(\ref{classalpha}) viewed as a~map from $\cg^*$, to give
\[ \nabla^\bullet_P p=\extd p - p\rz \bullet \omega^i\<e_i,\varpi^\bullet (p\co)\>=\nabla_P p +{\lambda\over 2}\Gamma(p)+\mathcal{O}\big(\lambda^2\big)\]
for $\Gamma(p)$ as in (\ref{Gammaomega}), after a calculation to expand the bullets to semiclassical order. \end{Remark}

\section{Concluding remarks}

We have shown that Drinfeld's theory of Poisson--Lie groups~\cite{Dri:ham, Drinfeld} extends in a natural way to principal bundles $X$ which are Poisson manifolds and have Poisson--Lie structure group~$G$. The PLG condition itself is viewed as a certain covariance of the Poisson bracket $\{\ ,\ \}_G$ and extends to the notion of a PLG action on $\{\ ,\ \}_X$. We extended these covariance notions to Poisson-level quantum differential structures in the sense of Poisson-compatible contravariant connections~$\hat\nabla^G$ and~$\hat\nabla^X$ respectively in the sense of \cite{BegMa:sem,Fer,Haw, Hue,MaTao:plg} and characterised bicovariance of the former. Section~\ref{PPB} introduced a further transversality condition expressing that $X\to M=X/G$ is a Poisson-level version of a quantum principal bundle. This is such that $M$ is not only a Poisson manifold but inherits a Poisson-level quantum differential structure in the sense of $\hat\nabla^M$. We were then able to formulate `spin connections' and connections on associated bundles at semiclassical level. Results were illustrated on the Poisson level of the $q$-Hopf fibration deforming ${\rm SU}_2\to S^2$ with $S^1$ fibre, the latter as a PLG with zero $\{\ ,\ \}_G$ but nonzero $\hat\nabla^G$. It remains to compute more complicated examples.

A direction for further work would be a general Poisson version of the theory of quantum homogeneous bundles, of which the $q$-Hopf fibration above is the simplest example. This theory with semiclassical differential structures would be rather different from previous discussions of Poisson homogeneous spaces, for example in~\cite{Dri:hom}. Whether our approach can extend to a~geometric picture of the full 2-parameter Podle\'s spheres~\cite{Pod} is unclear since a direct quantum principal bundle approach would seem to require coalgebra bundles~\cite{BrzMa:coa}, for which the notion of differential calculus on the fibre is not known. Another issue, even for ordinary quantum principal bundles, is that associative bicovariant calculi on $q$-deformation quantum groups are often not possible with classical dimension~\cite{BegMa:sem}. This was not a problem in our approach at the Poisson level as we were careful not to assume flatness of $\hat\nabla^G$, but quantisation would be a~challenge. It should be possible, for example, to construct a nonassociative deformation of the next Hopf fibration $S^7 \to S^4$ with ${\rm SU}_2$ fibre, combining the Hopf-quasigroup methods in~\cite{KliMa} and the nonassociative bicovariant calculi by twisting in~\cite{BegMa:sem,BegMa:twi}.

\looseness=-1 A third interesting direction would be quantum group frame bundles as in~\cite{Ma:non} at the Poisson level whereby $\Omega^1(M)$ is an associated bundle to a Poisson principal bundle. In this context, it could make sense to ask that $\hat\nabla$ comes from a contravariant connection on the underlying classical principal bundle in the sense of~\cite{Fer}. The framed theory would, moreover, extend the above to a PLG-covariant version of Riemannian geometry on $M$ as a somewhat different approach to~\cite{BegMa:prg}. Indeed, one expects the quantum metrics at semiclassical order to be different in the two approaches even for the Poisson level of the metric for the $q$-sphere. It could also be of interest to consider quantum fibrations more generally, and to relate to integrability of Lie algebroids~\cite{CraFer}.

\vspace{-2mm}

\pdfbookmark[1]{References}{ref}
\LastPageEnding

\end{document}